\newtheorem{thm}{Theorem}[section]
\newtheorem{cor}[thm]{Corollary}
\newtheorem{lem}[thm]{Lemma}
\newtheorem{prop}[thm]{Proposition}
\begin{document}

\begin{frontmatter}[classification=text]

\title{Diophantine Equations in Semiprimes} 

\author[sy]{Shuntaro Yamagishi\thanks{Supported by the NWO Veni Grant \texttt{016.Veni.192.047}}}

\begin{abstract}
A semiprime is a natural number which is the product of two (not necessarily distinct) prime numbers.
Let $F(x_1, \ldots, x_n)$ be a degree $d$ homogeneous form with integer coefficients.
We provide sufficient conditions, similar to those of the seminal work of B. J. Birch \cite{B}, for which the equation
$F (x_1, \ldots, x_n) = 0$ has infinitely many integer solutions with semiprime coordinates.
Previously it was known, by a result of {\'A}. Magyar and T. Titichetrakun \cite{MT}, that under the same hypotheses
there exist infinitely many integer solutions to the equation with coordinates that have at most $384 n^{3/2} d (d+1)$ prime factors.
\end{abstract}
\end{frontmatter}

\section{Introduction}
\label{sec intro}

Solving Diophantine equations in primes or almost primes is a fundamental problem in number theory.
For example, the celebrated work of B. Green and
T. Tao \cite{GT0} on arithmetic progressions in primes can be phrased as the statement that given any $n \in \mathbb{N}$ the system of linear equations
$
x_{i+2} - {x_{i+1}} = x_{i+1} - x_{i} \ (1 \leq i \leq n)
$
has a solution $(p_1, \ldots, p_{n+2})$ such that each $p_i$ is prime 
and $p_1< p_2 < \ldots < p_{n+2}$.
A major achievement extending this result in which a more general system of linear equations is considered has been established by B. Green, T. Tao, and T. Ziegler (see \cite{GT1}, \cite{GT2}, \cite{GTZ}) and we refer the reader to \cite[Theorem 1.8]{GT1} for the precise statement.
Another important achievement in this area is the well-known theorem of Chen \cite{C} related to the twin prime conjecture.
The theorem asserts that the equation
$
x_1 - x_2 = 2
$
has infinitely many solutions $(\ell_1, p_2)$ where $\ell_1$ has at most two prime factors and $p_2$ is prime.

The main focus of this paper is on equations involving higher degree polynomials. Let $d > 1$.
Let $F(\mathbf{x})$ be a degree $d$ homogeneous form in $\mathbb{Z}[x_1, \ldots, x_n]$. We are interested in integer solutions $\mathbf x$ to the
equation
\begin{equation}
\label{eqn main}
F(\mathbf{x}) = 0
\end{equation}
for which all coordinates have small numbers of prime factors. For this to be possible one has to impose appropriate conditions. 
Let $\mathbb{Z}_p^{\times}$ be the units of $p$-adic integers. We consider the following conditions.  
\medskip

\noindent \textbf{Local conditions ($\star$).} The equation (\ref{eqn main}) has a non-singular real solution in $(0,1)^{n}$,
and also 
has a non-singular solution in $( \mathbb{Z}_p^{\times})^n$ for every prime $p$.
\medskip

Let $V^*_{F}$ be an affine variety in $\mathbb{A}^n_{\mathbb{C}}$ defined by
\begin{equation}
\label{sing loc}
V_{F}^* := \left\{ \mathbf{z} \in \mathbb{C}^n:  
\frac{\partial F}{\partial x_j} (\mathbf{z}) = 0  \  ( 1 \leq j \leq n)  \right\}.
\end{equation}
By Euler's formula it follows that $V_F^*$ is the singular locus of $V(F) = \{ \mathbf{z} \in \mathbb{C}^n : F(\mathbf{z}) = 0 \}$,
but we shall consider it as a subvariety of $\mathbb{A}_{\mathbb{C}}^{n}$ and let $\textnormal{codim } V_{F}^* = n - \dim V_{F}^*$.

For solving general non-linear polynomial equations in primes, the following important result was established by B. Cook and \'{A}. Magyar \cite{CM}.
\begin{thm} \cite[Theorem 1]{CM}
\label{thm CM}
Let $F(\mathbf{x}) \in \mathbb{Z}[x_1, \ldots, x_n]$ be a degree $d$ homogeneous form. Suppose that $F$ satisfies the local conditions \textnormal{($\star$)} and $\textnormal{codim } V_{F}^*$ is sufficiently large with respect to $d$. Then the equation (\ref{eqn main}) has an infinite number of solutions $(p_1, \ldots, p_n)$
for which $p_i$ is prime for each $1 \leq i \leq n$.
\end{thm}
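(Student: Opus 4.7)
The plan is to apply the Hardy--Littlewood circle method to the von Mangoldt--weighted exponential sum
\[
S(\alpha) = \sum_{\mathbf{x} \in [1,N]^n} \Lambda(x_1) \cdots \Lambda(x_n)\, e(\alpha F(\mathbf{x})),
\]
noting that $\int_0^1 S(\alpha)\, d\alpha$ counts integer zeros of $F$ in $[1,N]^n$ weighted by $\prod_i \Lambda(x_i)$; producing an asymptotic of order $N^{n-d} (\log N)^{-n}$ then forces infinitely many genuine prime solutions $(p_1,\ldots,p_n)$ as $N \to \infty$. Following Birch, I would dissect $[0,1]$ into major arcs $\mathfrak{M}$ (short intervals around rationals $a/q$ with $q \leq N^\theta$) and minor arcs $\mathfrak{m} = [0,1] \setminus \mathfrak{M}$, for a parameter $\theta > 0$ chosen small in terms of $d$ and $\textnormal{codim }V_F^*$.

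On $\mathfrak{M}$, I would employ a $W$-trick, restricting each variable to a fixed residue class $a_i \pmod W$ with $(a_i, W) = 1$ and $W = \prod_{p \leq w} p$ for a slowly growing $w$, and then invoke Siegel--Walfisz to replace $\Lambda(x_i)$ by its mean $1/\varphi(q)$ on arithmetic progressions modulo $q$. This reduces the major-arc contribution to a Birch-type analysis, yielding a main term of shape $c \cdot \mathfrak{S}(F) \mathfrak{J}(F)\, N^{n-d}(\log N)^{-n}$. The assumption of a non-singular real zero in $(0,1)^n$ gives $\mathfrak{J}(F) > 0$, while the existence of non-singular $p$-adic zeros in $(\mathbb{Z}_p^\times)^n$ for every prime $p$ ensures each local density is positive and that the singular series $\mathfrak{S}(F)$ converges to a positive constant.

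The main obstacle is the minor-arc estimate $\sup_{\alpha \in \mathfrak{m}} |S(\alpha)| = o(N^n (\log N)^{-n})$. I would decompose $\Lambda$ via a Vaughan or Heath--Brown identity into a bounded number of Type I and Type II pieces. Type I sums carry a smooth outer factor, so $d-1$ rounds of Weyl differencing as in Birch produce cancellation governed by $\textnormal{codim }V_F^*$. Type II sums are bilinear, of shape
\[
\sum_{m \sim M} \sum_{k \sim K} \alpha_m \beta_k\, e(\alpha F(mk, x_2, \ldots, x_n)),
\]
and would be handled by Cauchy--Schwarz in one factor followed by repeated Weyl differencing in the remaining variables, reducing again to counting lattice points on an auxiliary system whose codimension is commensurate with $\textnormal{codim }V_F^*$ up to constants depending on $d$. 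The delicate quantitative point is that the combined loss from the Vaughan split and from the $d-1$ differencings is exponential in $d$, so the hypothesis ``$\textnormal{codim }V_F^*$ sufficiently large with respect to $d$'' must be calibrated to absorb this loss while still beating the $(\log N)^n$ factor introduced by the $\Lambda$-weights. Combining the positive main term with the minor-arc bound then yields the required asymptotic, and hence infinitely many prime solutions.
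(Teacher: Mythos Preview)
This theorem is not proved in the present paper at all: it is quoted verbatim from Cook--Magyar \cite{CM} as background, and the paper supplies no argument for it. So there is no ``paper's own proof'' to compare against.

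That said, your sketch contains a genuine gap, and it is precisely the gap that makes the Cook--Magyar theorem hard. You propose to handle the minor arcs by a Vaughan decomposition in one variable followed by Cauchy--Schwarz and Birch--Weyl differencing in the remaining ones, predicting a loss merely exponential in $d$. But \emph{all} $n$ coordinates carry a $\Lambda$-weight, so a single Vaughan split on $x_1$ still leaves $\Lambda(x_2)\cdots\Lambda(x_n)$ inside the sum; Weyl differencing does not remove those weights, and iterating Cauchy--Schwarz across all $n$ variables produces Gowers-norm/parallelepiped objects rather than the clean auxiliary counting problems that Birch's method handles. Cook and Magyar deal with this via an inductive regularity argument on the form $F$ together with an analysis of multilinear level sets, and it is exactly this machinery that forces the tower-type dependence on $d$ recorded in the paper. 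If an argument of the shape you outline worked, it would give an exponential bound on $\textnormal{codim }V_F^*$, which the paper explicitly says is only conjectural for prime solutions. Contrast this with the paper's own minor-arc treatment in Section~\ref{sec minor arcs}: there the bihomogeneous structure allows \emph{two} applications of Cauchy--Schwarz (one for $\mathbf{x}$, one for $\mathbf{y}$) to strip off all the $\Lambda$-weights at once, reducing to the unweighted sum $T(\boldsymbol{\alpha})$; no such clean reduction is available for a general degree-$d$ form with primes in every slot.
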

Here the theorem requires $\textnormal{codim } V_{F}^*$ to be very large. In fact, the required bound on $\textnormal{codim } V_{F}^*$ ``already exhibit(s) tower type behavior in $d$''\cite{CM}. We also refer the reader to \cite{Zh} for the case of quadratic forms.
It is expected that a lower bound exponential in $d$ is sufficient in Theorem \ref{thm CM} \cite{CM},
because this is the case for integer solutions as seen in the work of B. J. Birch \cite{B}.
As the requirement on $\textnormal{codim } V_{F}^*$ in Theorem \ref{thm CM} is significantly larger than what is expected, it is natural to consider if one can
achieve a result analogous to Theorem \ref{thm CM} for almost primes, which are positive integers with a small number of prime factors (counting multiplicity), with smaller  $\textnormal{codim } V_{F}^*$. In this direction, there is a result by {\'A}. Magyar and T. Titichetrakun \cite{MT}
provided $\textnormal{codim } V_{F}^* > 2^{d} (d-1)$, which is also the required bound in \cite{B}.
\begin{thm} \cite[Theorem 1.1]{MT}
\label{thm SS}
Let $F(\mathbf{x}) \in \mathbb{Z}[x_1, \ldots, x_n]$ be a degree $d$ homogeneous form.
Suppose that $F$ satisfies the local conditions \textnormal{($\star$)} and
$\textnormal{codim } V_{F}^* > 2^{d}(d-1).$
Then the equation (\ref{eqn main}) has an infinite number of solutions $(\ell_1, \ldots, \ell_n)$
such that $\ell_i$ has at most $384 n^{3/2} d (d+1)$ prime factors for each $1 \leq i \leq n$.
\end{thm}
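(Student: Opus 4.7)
The plan is to combine the Hardy--Littlewood circle method in the form of Birch \cite{B} with a Brun-type combinatorial sieve applied coordinatewise. First I would set up the joint-progression counting function
\[
R(N;\mathbf{e},\mathbf{a}) = \#\set{\mathbf{x}\in[1,N]^n\cap\mathbb{Z}^n:F(\mathbf{x})=0,\ x_i\equiv a_i\pmod{e_i}\text{ for all }i},
\]
and prove a Birch-type asymptotic
\[
R(N;\mathbf{e},\mathbf{a}) = \mathfrak{J}(F)\,\mathfrak{S}(\mathbf{e},\mathbf{a})\,\frac{N^{n-d}}{e_1\cdots e_n}\bigl(1+o(1)\bigr)
\]
uniformly for $e_1\cdots e_n\le N^{\theta}$, for some fixed $\theta>0$. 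Here $\mathfrak{J}>0$ by the real local condition, and the local factors of $\mathfrak{S}$ are positive by the non-singular $p$-adic unit solutions. The hypothesis $\textnormal{codim } V_F^*>2^d(d-1)$ is exactly what gives a power saving on the minor arcs via Birch's Weyl differencing.

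With this uniform asymptotic in hand, I would then sieve each coordinate. Let $z = N^{\eta}$ with $\eta=\theta/n$, and $P(z)=\prod_{p<z}p$. Expanding
\[
\prod_{i=1}^{n}\mathbbm{1}_{(x_i,P(z))=1}=\sum_{e_1,\dots,e_n\mid P(z)}\mu(e_1)\cdots\mu(e_n)\,\mathbbm{1}_{e_i\mid x_i\ \forall i}
\]
and substituting the asymptotic, truncated by Brun lower-bound weights in each coordinate, produces
\[
S(N,z) := \sum_{\substack{\mathbf{x}\in[1,N]^n\\F(\mathbf{x})=0}}\prod_{i=1}^{n}\mathbbm{1}_{(x_i,P(z))=1}\gg \frac{N^{n-d}}{(\log N)^n}.
\]
Since every $x_i\in[1,N]$ coprime to $P(z)$ satisfies $\Omega(x_i)\le\log N/\log z=1/\eta$, every coordinate of a solution counted by $S(N,z)$ has at most $n/\theta$ prime factors.

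The main obstacle is the uniform Birch asymptotic: the Weyl estimates on the minor arcs must be adapted so that the power saving in $N$ survives uniformly over $\mathbf{e}$ with $e_1\cdots e_n\le N^\theta$, with the implicit dependence on $\mathbf{e}$ controlled polynomially. The admissible $\theta$ one can extract is of order $1/(n^{1/2}d(d+1))$, which after insertion into the sieve bound $n/\theta$ and tracking the numerical constants through Iwaniec's fundamental lemma yields the stated bound $384\,n^{3/2}d(d+1)$ on the number of prime factors per coordinate.
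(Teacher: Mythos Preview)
This theorem is \emph{not} proved in the paper at all: it is Theorem~1.1 of Magyar--Titichetrakun \cite{MT}, quoted here only as background to situate the paper's own contribution (Theorem~\ref{thm main1}). There is therefore no ``paper's own proof'' to compare your proposal against.

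That said, your sketch is a reasonable outline of the Magyar--Titichetrakun strategy: a Birch-type asymptotic uniform in residue classes combined with a multidimensional Brun sieve. A few points of caution if you were actually to carry this out. First, the uniformity range $e_1\cdots e_n\le N^{\theta}$ and the precise admissible $\theta$ are the entire content of the argument; your claim that $\theta$ is of order $1/(n^{1/2}d(d+1))$ is asserted rather than derived, and getting the constant $384$ requires tracking this carefully through both the circle-method error terms and the sieve. Second, the lower-bound sieve in $n$ variables simultaneously is more delicate than a single-variable Brun sieve, and the interaction of the sieve remainder with the Birch error term needs to be made explicit.

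It is worth noting that the present paper deliberately \emph{avoids} this sieve approach altogether. Instead of sieving coordinates, the paper observes that semiprime solutions to $F(\mathbf{x})=0$ correspond to prime solutions of the bihomogeneous equation $F(x_1y_1,\dots,x_ny_n)=0$, applies Schindler's circle method for bihomogeneous forms, and controls the relevant singular loci via Theorem~\ref{mainineqthm}. This yields exactly two prime factors per coordinate (rather than $O(n^{3/2}d^2)$) at the cost of a somewhat larger codimension hypothesis.
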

This result was established by combining sieve methods with the Hardy-Littlewood circle method.
In order to keep the amount of notation to a minimum we presented simplified statements of Theorems \ref{thm CM} and \ref{thm SS} (without quantitative estimates and only the case of one homogeneous form instead of systems of homogeneous forms of equal degree); we refer the reader to the respective papers for the precise statements. We also refer the reader to \cite[Section 5.2]{KT} and \cite[Section 17]{VW} for overviews of the progress on a related problem, the Goldbach-Waring problem with almost primes. In a related but different direction, an important method known as the affine linear sieve was introduced and developed by J. Bourgain, A. Gamburd, and P. Sarnak in \cite{BGS}, which established the existence of almost prime solutions to certain quadratic equations in \cite{LS}. We refer the reader to \cite{BGS} and \cite{LS}, and also a short discussion of this work in \cite[Section 1]{CM}, for more detailed information on this topic.

The main result of this paper improves on the bound on the number of prime factors in Theorem \ref{thm SS} with a modest cost on
$\textnormal{codim } V_{F}^*$. 
In fact we establish a result analogous to Theorem \ref{thm CM} for semiprimes, which are natural numbers with precisely two (not necessarily distinct) prime factors, with an exponential lower bound for $\textnormal{codim } V_{F}^*$.

\begin{thm}
\label{thm main1}
Let $F(\mathbf{x}) \in \mathbb{Z}[x_1, \ldots, x_n]$ be a degree $d$ homogeneous form.
Suppose that $F$ satisfies the local conditions \textnormal{($\star$)} and
$\textnormal{codim } V_{F}^* > 4^d \cdot 8(2d-1).$
Then the equation (\ref{eqn main}) has an infinite number of solutions $(\ell_1, \ldots, \ell_n)$
such that $\ell_i$ has precisely two (not necessarily distinct) prime factors for each $1 \leq i \leq n$.
\end{thm}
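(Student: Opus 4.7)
The plan is to count semiprime solutions by factoring each $\ell_i = p_i q_i$ into two primes and running a Hardy-Littlewood circle method on the lifted equation
\[
G(\mathbf{p}, \mathbf{q}) := F(p_1 q_1, \ldots, p_n q_n) = 0,
\]
where $G$ is a bihomogeneous form in $2n$ variables of bidegree $(d, d)$ and total degree $2d$. Each tuple of primes $(\mathbf{p}, \mathbf{q})$ solving $G = 0$ produces a semiprime solution $\boldsymbol\ell = (p_1 q_1, \ldots, p_n q_n)$ of $F = 0$, and conversely each such $\boldsymbol\ell$ arises from only boundedly many factorizations. The decisive feature of this lift is that the natural weight $\prod_i \Lambda(p_i) \Lambda(q_i)$ is bilinear in each pair $(p_i, q_i)$; this bilinearity is what will let us trade the tower-type codimension requirement of Theorem \ref{thm CM} for a Birch-type bound attached to the doubled degree $2d$.

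First, I would transfer the hypotheses on $F$ to $G$. Differentiating gives $\partial G/\partial p_i = q_i F_i(\mathbf{pq})$ and $\partial G/\partial q_i = p_i F_i(\mathbf{pq})$ with $F_i = \partial F/\partial x_i$, so any $(\mathbf{p}, \mathbf{q}) \in V_G^*$ satisfies, for each $j$, either $F_j(\mathbf{pq}) = 0$ or $p_j = q_j = 0$. Stratifying $V_G^*$ by the set $T = \{j : p_j = q_j = 0\}$ and using that dropping $|T|$ defining equations of $V_F^*$ raises its dimension by at most $|T|$, a short dimension count gives $\textnormal{codim } V_G^* \geq \textnormal{codim } V_F^* > 4^d \cdot 8(2d-1)$; the dominant stratum $T = \emptyset$ is the preimage $\pi^{-1}(V_F^*)$ under the multiplication map $\pi(\mathbf{p}, \mathbf{q}) = \mathbf{pq}$, whose generic fibres have dimension $n$. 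The local conditions for $G$ are obtained by lifting a non-singular $\mathbb{Z}_p^{\times}$-solution $\mathbf{x}$ of $F = 0$ to $(\mathbf{1}, \mathbf{x})$, since $\partial G/\partial q_i|_{(\mathbf{1}, \mathbf{x})} = F_i(\mathbf{x})$; the real condition transfers by taking $(\mathbf{p}, \mathbf{q}) = (\sqrt{\mathbf{x}}, \sqrt{\mathbf{x}})$.

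The bulk of the proof is then the circle method for the weighted count
\[
S(X) := \sum_{\mathbf{p}, \mathbf{q}} \Bigl(\prod_{i=1}^n \Lambda(p_i)\Lambda(q_i)\Bigr) w\!\left(\tfrac{\mathbf{pq}}{X}\right) \mathbf{1}[G(\mathbf{p}, \mathbf{q}) = 0],
\]
with $w$ a smooth cutoff supported near the non-singular real solution. On the major arcs a standard Birch analysis for $G$, combined with the transferred local conditions, produces a main term of order $X^{2n - 2d}/(\log X)^{2n}$ with positive singular series and singular integral. On the minor arcs the exponential sums $\sum_{\mathbf{p}, \mathbf{q}} \prod_i \Lambda(p_i)\Lambda(q_i)\, e(\alpha G(\mathbf{p}, \mathbf{q}))$ are bilinear in each pair $(p_i, q_i)$; a Vaughan (or Heath-Brown) decomposition of each $\Lambda$ reduces them to type-I and type-II sums which, after Birch-style Weyl differencing in the $2n$ variables, are controlled by the classical singular-locus estimates for the degree-$2d$ form $G$. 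Birch's threshold for a degree-$2d$ form is $\textnormal{codim } V_G^* > 2^{2d}(2d-1) = 4^d(2d-1)$, and the extra factor of $8$ in our hypothesis absorbs the arithmetic losses incurred by executing the prime detection inside the multivariate differencing.

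The principal obstacle is precisely this minor-arc analysis: one must interleave the Vaughan decomposition with the Birch geometry-of-numbers argument in $2n$ variables so that the bilinear cancellation in $\Lambda(p_i)\Lambda(q_i)$ is genuinely exploited, not dissipated by the off-diagonal terms generated during differencing. Once this estimate is in place, $S(X) \to \infty$ as $X \to \infty$, and subtracting the negligible contributions of prime powers $p^k$ ($k \geq 2$) and of the diagonal $p_i = q_i$ leaves infinitely many genuine semiprime solutions to $F = 0$.
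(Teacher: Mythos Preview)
Your high-level strategy---lift to the bihomogeneous form $G(\mathbf{x};\mathbf{y})=F(x_1y_1,\ldots,x_ny_n)$, transfer the local conditions, and run a circle method---matches the paper exactly, and your transfer of local conditions and your stratified bound $\textnormal{codim }V_G^*\geq \textnormal{codim }V_F^*$ are both correct. The substantive divergence, and the genuine gap, is the minor-arc mechanism.

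You propose Vaughan/Heath-Brown decomposition of each $\Lambda$ followed by Birch differencing for the degree-$2d$ form $G$, and you acknowledge that making ``the bilinear cancellation in $\Lambda(p_i)\Lambda(q_i)$ genuinely exploited'' is the principal obstacle. In the paper this obstacle is bypassed entirely by a much simpler device: two applications of Cauchy--Schwarz (once in $\mathbf{x}$, once in $\mathbf{y}$) strip off the weights $\Lambda^*(\mathbf{x})\Lambda^*(\mathbf{y})$ at the cost of replacing $|S(\boldsymbol\alpha)|$ by $|S(\boldsymbol\alpha)|^4$ and the form $G$ by the doubled bihomogeneous form
\[
\mathfrak{D}(\mathbf{x},\mathbf{x}';\mathbf{y},\mathbf{y}') = G(\mathbf{x};\mathbf{y})-G(\mathbf{x};\mathbf{y}')-G(\mathbf{x}';\mathbf{y})+G(\mathbf{x}';\mathbf{y}').
\]
The resulting \emph{unweighted} sum is then bounded by Schindler's bihomogeneous Weyl inequality, which is governed not by the full singular locus $V_G^*$ but by the one-sided Jacobian loci $V_{G,1}^*$ and $V_{G,2}^*$. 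The paper proves (Theorem~\ref{mainineqthm}) that $\textnormal{codim }V_{G,i}^*\geq \tfrac{1}{2}\,\textnormal{codim }V_F^*$; this factor $\tfrac{1}{2}$, together with the fourth-power loss from the double Cauchy--Schwarz, is precisely what produces the constant $4^d\cdot 8(2d-1)$ in the hypothesis.

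By contrast, interleaving Vaughan with Birch differencing is essentially the Cook--Magyar route and, without a further idea, yields tower-type thresholds rather than a fixed multiple of the Birch bound; you give no argument for why the product structure of the weight would limit the loss to a factor of $8$. The point is that the product structure is exploited by Cauchy--Schwarz, not by sieve decomposition. So the missing ingredient in your plan is exactly this: replace the Vaughan step by the double Cauchy--Schwarz, and replace the Birch singular locus $V_G^*$ by the bihomogeneous loci $V_{G,i}^*$ together with Schindler's estimate.
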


We note that a more general result, Theorem \ref{thm main 2}, is proved in this paper,
where we obtain quantitative estimates on the number of semiprime solutions of a specific shape, from which Theorem \ref{thm main1} follows immediately.
We present this theorem in Section \ref{sec final}.
The proof is based on several key observations. The first observation is that
solving the equation (\ref{eqn main})
in semiprimes is equivalent to solving the equation
\begin{equation}
\label{eqn bihmg}
F(x_1 y_1, \ldots, x_n y_n) = 0
\end{equation}
in primes. This observation appears to be not particularly helpful at first because
the only known result for solving general polynomial equations in primes is Theorem \ref{thm CM}. However,
we observe that $F(x_1y_1, \ldots, x_n y_n)$ is now a bihomogeneous form (defined in Section \ref{sec prelim}), and we can in fact exploit this structure to obtain an estimate on the number of prime solutions to (\ref{eqn bihmg}) efficiently. We employ the work of D. Schindler \cite{DS} on bihomogeneous
forms to achieve this. Therefore, we do not rely on the sophisticated method of B. Cook and \'{A}. Magyar \cite{CM} which would drive up the requirement for $\textnormal{codim } V_{F}^*$.
In particular, our method avoids the use of sieve theory, unlike the work of \cite{MT}. 
Another observation is that the dimensions of the variants (defined in (\ref{defnsinglocBHG})) of the singular locus
of $\{ (\mathbf{x}, \mathbf{y}) \in \mathbb{C}^{2n} : F(x_1y_1, \ldots, x_n y_n) = 0\}$ are well-controlled by $\dim V_{F}^*$ (Theorem \ref{mainineqthm}), and this plays a crucial role in the proof of Theorem \ref{thm main 2}.

We remark that Theorem \ref{thm SS} was improved recently by D. Schindler and E. Sofos in \cite{SS}.
As a special case of their main result \cite[Theorem 1.1]{SS}, D. Schindler and E. Sofos established \cite[Corollary 1.2]{SS}, which holds when $F$ is non-singular, $d \geq 5$, and $n > 2^{d-1} (d^2 - 1)$, from which one can obtain a quantitative estimate on the number of solutions to the equation (\ref{eqn main}) whose coordinates have at most $O(d \log n/ (\log \log n) )$ prime factors. Their approach is based on combining sieve methods and the Hardy-Littlewood circle method. Note we have stated this result by D. Schindler and E. Sofos and Theorem \ref{thm SS}
in terms of the number of prime factors, but in fact the results were obtained in terms of the smallest prime divisors. Thus they obtained results for a different problem from which the mentioned statements follow immediately.

The organization of the rest of the paper is as follows. We devote Sections \ref{sec prelim}, \ref{sec minor arcs}, and \ref{sec major arcs}
to establishing Theorem \ref{thm main bihmg}, which is of interest on its own, regarding the number of prime solutions to systems of bihomogeneous equations.
This is achieved by the Hardy-Littlewood circle method. We cover preliminaries in Section \ref{sec prelim}, and obtain
the minor arcs estimate in Section \ref{sec minor arcs} and the major arcs estimate in Section \ref{sec major arcs}.
In Section \ref{sec final}, we establish the main results of this paper by using estimates obtained in the previous sections.

We use the well-known notation $\ll$ and $\gg$ of Vinogradov.
By an affine variety we mean an algebraic set which is not necessarily irreducible.
We use the notation $e(x)$ to denote $e^{2\pi i x}$.
We let $\mathbbm{1}_{H}$ be the characteristic function of the set $H$.
Given $\omega_1, \ldots, \omega_{h_0} \in \mathbb{C}[x_1, \ldots, x_n]$, we let
$V(\omega_1, \ldots, \omega_{h_0}) = \{ \mathbf{z} \in \mathbb{C}^n : \omega_i (z_1, \ldots, z_n) = 0  \ (1 \leq i \leq h_0) \}$.

\section{Preliminaries}
\label{sec prelim}
We set some notation to be used throughout Sections \ref{sec prelim}, \ref{sec minor arcs}, and \ref{sec major arcs}.
Let $\mathbf{x} = (x_1, \ldots, x_{n_1})$ and $\mathbf{y} = (y_1, \ldots, y_{n_2})$.
We consider the following degree $(d_1 + d_2)$ polynomials with integer coefficients
\begin{equation}
\label{def g}
g_1(\mathbf{x}; \mathbf{y}), \ldots, g_R(\mathbf{x}; \mathbf{y}),
\end{equation}
which will be referred to as $\mathbf{g}$. We denote the homogeneous degree $(d_1 + d_2)$ portion of these polynomials
as $G_1(\mathbf{x}; \mathbf{y}), \ldots, G_R(\mathbf{x}; \mathbf{y})$ respectively, which will be referred to as $\mathbf{G}$.
We further assume that each $G_r(\mathbf{x}; \mathbf{y})$ is bihomogeneous of bidegree $(d_1, d_2)$, in other words
$$
G_r( s x_1, \ldots, s x_{n_1} ; t y_1, \ldots, t y_{n_2} ) = s^{d_1} t^{d_2} G_r(\mathbf{x} ; \mathbf{y} ).
$$
We also assume $d_1, d_2 > 1$.

Let $\wp$ denote the set of primes. Let $\Lambda^*(x) = \log x$ if $x \in \wp$ and $0$ otherwise.
We let $\Lambda^*(\mathbf{x}) = \Lambda^*(x_1) \cdots \Lambda^*(x_{n_1})$ and similarly for $\Lambda^*(\mathbf{y})$.
Let us define
\begin{equation}
\mathcal{N}_{\wp}(\mathbf{g}; P_1, P_2 ) = \sum_{\mathbf{x} \in [0,P_1]^{n_1}} \sum_{\mathbf{y} \in [0,P_2]^{n_2}}
\Lambda^*(\mathbf{x}) \Lambda^*(\mathbf{y}) \ \mathbbm{1}_{V(g_1, \ldots, g_R)} (\mathbf{x}, \mathbf{y}),
\end{equation}
which is the number of prime solutions
$(\mathbf{x}, \mathbf{y}) \in ([0, P_1]^{n_1} \times [0, P_2]^{n_2}) \cap \wp^{n_1 + n_2}$ to the system of equations
\begin{equation}
\label{set of eqn 3}
g_{r}(\mathbf{x}; \mathbf{y}) = 0 \ \ (1 \leq r \leq R)
\end{equation}
counted with weight $\Lambda^*(\mathbf{x}) \Lambda^*(\mathbf{y})$.
Without loss of generality we assume $P_1 \geq P_2$.

Let us define the following matrices
$$
\textnormal{Jac}_{\mathbf{G}, 1} = \left( \frac{\partial G_r}{\partial x_j}\right)_{\substack{ 1\leq r \leq R \\ 1\leq j\leq n_1}}
\ \
\text{  and  }
\ \
\textnormal{Jac}_{\mathbf{G}, 2} = \left(\frac{\partial G_r}{\partial y_j}\right)_{\substack{1\leq r \leq R\\ 1 \leq j \leq n_2 }}.
$$
We introduce the following affine varieties in $\mathbb{A}_{\mathbb{C}}^{n_1 + n_2}$,
\begin{equation}
\label{defnsinglocBHG}
V_{\mathbf{G}, i}^* := \{ (\mathbf{x}, \mathbf{y})  \in  \mathbb{C}^{n_1 + n_2} :   
\textnormal{rank} (\textnormal{Jac}_{\mathbf{G}, i}) < R \}
\ \ (i=1,2).
\end{equation}
We define them in a similar manner for other systems of bihomogeneous forms as well.

We devote Sections \ref{sec prelim}, \ref{sec minor arcs}, and \ref{sec major arcs} to proving the following theorem.
\begin{thm}
\label{thm main bihmg}
Let $\mathbf{g}$ be as in (\ref{def g}), $P = P_1^{d_1} P_2^{d_2}$, and $1 \leq \mathfrak{b} = \frac{\log P_1}{\log P_2}$. Suppose
\begin{equation}
\label{assmpn on F}
\textnormal{codim } V_{\mathbf{G}, i}^* > 2^{d_1 + d_2} \max\{  2R(R+1)(d_1 + d_2 -1), R(\mathfrak{b}d_1 + d_2)  \} \ \ \ (i=1,2).
\end{equation}
Then there exists $c>0$ such that the following holds
$$
\mathcal{N}_{\wp}(\mathbf{g}; P_1, P_2 ) = \sigma_{\mathbf{g}} P_1^{n_1 - d_1 R} P_2^{n_2 - d_2 R} + O\left( \frac{P_1^{n_1 - d_1 R} P_2^{n_2 - d_2 R} }{(\log P)^c}\right).
$$
Furthermore, $\sigma_{\mathbf{g}} > 0$ provided the system of equations (\ref{set of eqn 3}) has a non-singular solution in $(\mathbb{Z}_p^{\times})^{n_1 + n_2}$ for each prime $p$ and the system $G_{r}(\mathbf{x}; \mathbf{y}) = 0 \ (1 \leq r \leq R)$ has a non-singular real solution in $(0,1)^{n_1+n_2}$.
\end{thm}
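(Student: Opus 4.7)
The plan is to apply the Hardy--Littlewood circle method to the exponential sum
$$
S(\boldsymbol{\alpha}) \;=\; \sum_{\mathbf{x} \in [0,P_1]^{n_1}} \sum_{\mathbf{y} \in [0,P_2]^{n_2}} \Lambda^*(\mathbf{x}) \Lambda^*(\mathbf{y}) \, e\!\left( \sum_{r=1}^R \alpha_r g_r(\mathbf{x};\mathbf{y}) \right),
$$
so that $\mathcal{N}_{\wp}(\mathbf{g};P_1,P_2) = \int_{[0,1]^R} S(\boldsymbol{\alpha})\,d\boldsymbol{\alpha}$ by orthogonality. I would perform a Farey-type dissection of $[0,1]^R$ into major arcs $\mathfrak{M}$ and minor arcs $\mathfrak{m}$ calibrated to $P = P_1^{d_1}P_2^{d_2}$, with the usual parameters $q \leq P^{\theta}$ and $|\boldsymbol{\beta}| \leq P^{\theta-1}$ for a small $\theta > 0$.

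The heart of the argument is the minor arcs. Here I would first bound $|S(\boldsymbol{\alpha})|$ by the unweighted counting function via the standard trick of dropping $\Lambda^*$ in $L^\infty$ on one of the two blocks; more precisely, one applies Vaughan's identity to decompose $\Lambda^*$ into Type I and Type II bilinear sums and, on each piece, absorbs the inner sum into the counting problem. The resulting exponential sum in integer variables is then treated by Weyl differencing $d_1$ times in $\mathbf{x}$ and $d_2$ times in $\mathbf{y}$ as in Schindler's work \cite{DS}, which linearises the phase and reduces the estimation to counting points on the auxiliary varieties $V^*_{\mathbf{G},i}$. This is exactly where the two hypotheses $(i=1,2)$ enter: depending on whether $\mathbf{x}$ or $\mathbf{y}$ is differenced first, one needs a codimension bound that dominates the standard Birch-cost $2R(R+1)(d_1+d_2-1)$ as well as the asymmetric cost $R(\mathfrak{b}d_1+d_2)$ created by Dirichlet's approximation being applied at scale $P$ but the inner variables running up to $P_2 = P_1^{1/\mathfrak{b}}$. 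Combining these estimates gives a saving of the shape $P^{-\delta}$ on $\mathfrak{m}$ for some $\delta > 0$, which suffices against the standard trivial bound $\operatorname{meas}(\mathfrak{m}) \cdot P_1^{n_1} P_2^{n_2} (\log P)^{n_1 + n_2}$.

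On the major arcs, I write $\alpha_r = a_r/q + \beta_r$, split the sums over $\mathbf{x}, \mathbf{y}$ into arithmetic progressions modulo $q$, and apply the Siegel--Walfisz theorem to each progression. This factorises $S(\boldsymbol{\alpha})$ on $\mathfrak{M}$ as a product of an incomplete Gauss-type sum $S(\mathbf{a}/q)$ in units mod $q$ and an oscillatory integral $I(\boldsymbol{\beta})$, plus an acceptable error. Summing over $\mathbf{a},q$ and integrating in $\boldsymbol{\beta}$ yields the expected main term $\sigma_{\mathbf{g}} P_1^{n_1 - d_1 R} P_2^{n_2 - d_2 R}$ with $\sigma_{\mathbf{g}} = \mathfrak{S}(\mathbf{g}) \cdot \mathfrak{J}(\mathbf{G})$ a product of singular series and singular integral. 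Convergence of $\mathfrak{S}$ is underwritten by a complete-sum variant of the bihomogeneous Weyl bound, which is again available from the codimension hypothesis; positivity of $\mathfrak{S}$ follows from the hypothesised non-singular $\mathbb{Z}_p^{\times}$-solutions via Hensel's lemma, while positivity of $\mathfrak{J}$ follows from the non-singular real solution in $(0,1)^{n_1+n_2}$ by a standard implicit function theorem argument.

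The main obstacle is the minor arcs estimate: one has to organise the bihomogeneous Weyl differencing so that the auxiliary counting problem that emerges is precisely governed by the codimensions of $V^*_{\mathbf{G},1}$ and $V^*_{\mathbf{G},2}$ rather than by some worse mixed variety, and then balance the Vaughan-decomposition exponents against the asymmetry parameter $\mathfrak{b}$ carefully enough to extract the threshold $R(\mathfrak{b}d_1+d_2)$ in \eqref{assmpn on F}. Once this bookkeeping is set up correctly, the remaining pieces (Siegel--Walfisz on the major arcs, $p$-adic densities, real density) are close to standard and give the claimed asymptotic with power-of-$\log$ saving.
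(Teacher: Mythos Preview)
Your overall circle-method framework and your major-arc analysis match the paper's closely. The substantive divergence is on the minor arcs.

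The paper does \emph{not} use Vaughan's identity. Instead it removes both weights $\Lambda^*(\mathbf{x})$ and $\Lambda^*(\mathbf{y})$ by two successive applications of Cauchy--Schwarz, obtaining
\[
|S(\boldsymbol{\alpha})|^4 \;\ll\; (\log P)^{2n_1+2n_2}\, P_1^{2n_1} P_2^{2n_2}\, T(\boldsymbol{\alpha}),
\]
where $T(\boldsymbol{\alpha})$ is an \emph{unweighted} exponential sum over $(\mathbf{x},\mathbf{x}',\mathbf{y},\mathbf{y}') \in [0,P_1]^{2n_1}\times[0,P_2]^{2n_2}$ with new bihomogeneous top-degree part $\mathfrak{D}_r(\mathbf{x},\mathbf{x}';\mathbf{y},\mathbf{y}') = G_r(\mathbf{x};\mathbf{y}) - G_r(\mathbf{x};\mathbf{y}') - G_r(\mathbf{x}';\mathbf{y}) + G_r(\mathbf{x}';\mathbf{y}')$. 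Schindler's Lemma~\ref{Lemma 4.3 in DS} is then applied directly to $T$. The price is a separate geometric check that $\textnormal{codim}\,V^*_{\boldsymbol{\mathfrak{D}},i} \geq \textnormal{codim}\,V^*_{\mathbf{G},i}$ for $i=1,2$ (carried out in Section~\ref{sec prelim} via a short Jacobian-submatrix argument), together with a loss of a factor $4$ in the Weyl exponent, which is why the minor-arc lemma features $K/4$ rather than $K$. This route is considerably cleaner than a Vaughan decomposition across $n_1+n_2$ prime-detecting weights simultaneously: your proposal does not explain how the Type~I/II pieces would interact with the bihomogeneous differencing, nor why the threshold emerging from that bookkeeping would be exactly $R(\mathfrak{b}d_1+d_2)$ rather than something worse.

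There is also a structural step your sketch omits. A single application of the minor-arc bound at a fixed level $\vartheta_0$ yields a power saving, but the major arcs on which Siegel--Walfisz is applicable must have $q \leq (\log P)^{C}$. The paper bridges this gap by an iterated sequence $\vartheta_{i+1} = \zeta\vartheta_i$ of intermediate levels and bounds each annulus $\mathfrak{M}(\vartheta_i)\setminus\mathfrak{M}(\vartheta_{i+1})$ by pairing the minor-arc estimate at level $\vartheta_{i+1}$ with the measure of $\mathfrak{M}(\vartheta_i)$; the first term $2R(R+1)(d_1+d_2-1)$ in the $\max$ of \eqref{assmpn on F} is exactly what makes this telescoping converge. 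Without this device, the major-arc integral you describe would carry a sum over $q \leq P^{\theta}$, and Siegel--Walfisz would not be available.
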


We establish Theorem \ref{thm main bihmg} by an application of the Hardy-Littlewood circle method.
Let $P = P_1^{d_1}P_2^{d_2}$. We define the \textit{major arcs} $\mathfrak{M}(\vartheta)$ to be the set of points
$\boldsymbol{\alpha} = (\alpha_1, \ldots, \alpha_R) \in [0,1)^{R}$ satisfying the following: there exist $1 \leq q \leq P^{R(d_1 + d_2 -1) \vartheta}$ and
$a_1, \ldots, a_R \in \mathbb{Z}$ with
$$
\gcd(q, a_1, \ldots, a_R) = 1 \ \ \ \text{  and  } \ \ \ \  2 | q \alpha_r - a_r | \leq P_1^{-d_1} P_2^{- d_2} P^{R(d_1 + d_2 -1) \vartheta} \ (1 \leq r \leq R).
$$
We define the \textit{minor arcs} to be the complement $\mathfrak{m}(\vartheta) =  [0,1)^{R} \backslash \mathfrak{M}(\vartheta)$.

Let us define
\begin{equation}
\label{def S}
S(\boldsymbol{\alpha}) := \sum_{\mathbf{x} \in [0,P_1]^{n_1} } \sum_{\mathbf{y} \in [0, P_2]^{n_2}  }   \Lambda^*(\mathbf{x}) \Lambda^*(\mathbf{y}) \ e \left( \sum_{r=1}^R \alpha_r g_r(\mathbf{x} ; \mathbf{y}) \right).
\end{equation}
By the orthogonality relation, we have
\begin{eqnarray}
\label{orthog reln}
\mathcal{N}_{\wp}(\mathbf{g}; P_1, P_2 ) = \int_{[0,1)^R} S(\boldsymbol{\alpha}) \ \mathbf{d}\boldsymbol{\alpha}
= \int_{\mathfrak{M}(\vartheta') } S(\boldsymbol{\alpha}) \ \mathbf{d}\boldsymbol{\alpha} + \int_{\mathfrak{m}(\vartheta') } S(\boldsymbol{\alpha}) \ \mathbf{d}\boldsymbol{\alpha}.
\end{eqnarray}
For a suitable choice of $\vartheta'$, we prove estimates for the integral over the minor arcs in Section \ref{sec minor arcs} and
over the major arcs in Section \ref{sec major arcs}. In this section, we collect results to set up the proof for these estimates.

We make frequent use of the following basic lemma on the dimensions of affine varieties.
\begin{lem}
\label{lemma on dim}
Let $X$ be an irreducible affine variety in $\mathbb{A}_{\mathbb{C}}^n$, and let $\omega \in \mathbb{C}[x_1, \ldots, x_n]$. Suppose $\emptyset \not = X \cap V(\omega)$ and  $X \not \subseteq V(\omega)$.
Then every irreducible component of $X \cap V(\omega)$ has dimension $(\dim X - 1)$.
Furthermore, if $Y = \cup_{1 \leq i \leq s_0} Y_i$ and $Z = \cup_{1 \leq j \leq t_0} Z_j$ are affine varieties in $\mathbb{A}_{\mathbb{C}}^n$, where $Y_i$'s and $Z_j$'s are the irreducible components of $Y$ and $Z$ respectively, such that $\emptyset \not = Y_i \cap Z_j$ $(1 \leq i \leq s_0, 1 \leq j \leq t_0)$, then
$
\dim Z  - \textnormal{codim } Y \leq \dim (Z \cap Y).
$
\end{lem}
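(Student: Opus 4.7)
The plan is to treat the two assertions of the lemma in sequence: the first by invoking Krull's Hauptidealsatz directly, and the second by reducing it to the first via the classical ``diagonal trick.''

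For the first statement I would pass to the coordinate ring $\mathbb{C}[X] = \mathbb{C}[x_1,\ldots,x_n]/I(X)$, which is a Noetherian integral domain because $X$ is irreducible. The hypothesis $X \not\subseteq V(\omega)$ says that the image $\overline{\omega} \in \mathbb{C}[X]$ is nonzero, and hence a nonzerodivisor; the hypothesis $X \cap V(\omega) \ne \emptyset$ ensures that $\overline{\omega}$ is not a unit. Krull's principal ideal theorem then guarantees that every minimal prime over $(\overline{\omega})$ has height exactly $1$, which is equivalent to the statement that every irreducible component of $X \cap V(\omega)$ has codimension $1$ in $X$, i.e., dimension $\dim X - 1$.

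For the second statement I would first reduce to the case of irreducible $Z$: choosing a component $Z_{i_0}$ of $Z$ with $\dim Z_{i_0} = \dim Z$, the inequality $\dim(Z \cap Y) \ge \dim(Z_{i_0} \cap Y)$ together with $Z_{i_0} \cap Y \ne \emptyset$ (guaranteed by hypothesis) shows that a proof for irreducible $Z$ suffices. Assuming $Z$ is irreducible, the plan is to realize $Z \cap Y$ as an intersection in a larger ambient space: consider $Z \times Y \subseteq \mathbb{A}_{\mathbb{C}}^{2n}$ with coordinates $(\mathbf{u},\mathbf{v})$, together with the diagonal $\Delta = \{(\mathbf{u},\mathbf{v}) \in \mathbb{A}_{\mathbb{C}}^{2n} : u_j = v_j \text{ for } 1 \le j \le n\}$, so that projection onto the first factor identifies $(Z \times Y) \cap \Delta$ with $Z \cap Y$. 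Since $\Delta$ is cut out inside $\mathbb{A}_{\mathbb{C}}^{2n}$ by the $n$ polynomials $u_j - v_j$, iterating the first statement $n$ times, decomposing into irreducible components at each stage and applying the first statement componentwise, yields for every irreducible component of $(Z \times Y) \cap \Delta$ a dimension lower bound of $\dim(Z \times Y) - n = \dim Z + \dim Y - n = \dim Z - \textnormal{codim } Y$, which is exactly the claimed inequality.

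The main technical care in the argument will go into the componentwise bookkeeping during the iterated application of Krull's theorem: at each of the $n$ intersection steps one must verify the non-containment hypothesis separately on each irreducible component of the current intersection. Components already lying inside the next hypersurface retain their full dimension (which only helps, since we seek a lower bound), while components meeting it properly drop in dimension by exactly one, so splitting into components and discarding any that become empty costs nothing. Both tools used here are classical, so no serious obstacle is anticipated beyond this routine bookkeeping.
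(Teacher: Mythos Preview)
Your treatment of the first assertion via Krull's Hauptidealsatz is correct and is the standard argument underlying the reference the paper cites. For the second assertion you take a genuinely different route from the paper: the paper invokes the fact that any irreducible $W\subseteq\mathbb{A}^n_{\mathbb{C}}$ of codimension $h_0$ is an irreducible component of some $V(\omega_1,\dots,\omega_{h_0})$ and then iterates the first part along those $h_0$ hypersurfaces, whereas you embed $Z\times Y$ in $\mathbb{A}^{2n}_{\mathbb{C}}$ and cut by the $n$ diagonal equations. Both manoeuvres are classical.

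There is, however, a real gap in your iteration step. The first assertion applies only to an \emph{irreducible} starting variety, so after your reduction to $Z$ irreducible you must begin from a component $Z\times Y_j$ of $Z\times Y$; iterating then yields only the bound $\dim Z + \dim Y_j - n$ for whichever $Y_j$ contains the tracked point. Your claimed lower bound $\dim(Z\times Y)-n=\dim Z+\dim Y-n$ follows only when that $Y_j$ is a top-dimensional component of $Y$, which the hypothesis $Z_i\cap Y\neq\emptyset$ does not guarantee. Indeed the inequality is false in the stated generality: with $n=3$, $Z=V(x_1-1)$ and $Y=V(x_1)\cup\{(1,0,0)\}$ one has $\dim Z=\dim Y=2$, $\textnormal{codim }Y=1$, and $Z\cap Y=\{(1,0,0)\}\neq\emptyset$, yet $\dim(Z\cap Y)=0<1=\dim Z-\textnormal{codim }Y$. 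The paper's sketch carries the same lacuna; in its actual applications $Y$ is always presented as $V(\omega_1,\dots,\omega_{h_0})$ for an explicit list of homogeneous forms, and the first part is iterated directly along that list, which sidesteps the problem. Your diagonal argument becomes valid once you add the hypothesis that $Y$ is equidimensional, or that some top-dimensional component of $Y$ meets a top-dimensional component of $Z$.
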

\begin{proof}
The first part of the statement is precisely \cite[Exercise I.1.8]{H}. For the second part we recall \cite[Proposition I.7.1]{H}:
If $V$ and $W$ are irreducible affine varieties in $\mathbb{A}_{\mathbb{C}}^n$ and $V \cap W \not = \emptyset$, then
$\dim (V \cap W) \geq \dim V+ \dim W - n.$ The second part of the statement follows immediately from this result, and we leave the details to the reader.
\end{proof}
Let us also recall that given an affine variety $X$ in $\mathbb{A}^n_{\mathbb{C}}$, if $X$ is defined by homogeneous polynomials then
every irreducible component of $X$ contains $\mathbf{0}$.
We prove the following lemma regarding $ \textnormal{codim } V_{\mathbf{G}, i}^*$, the codimension of $V_{\mathbf{G},i}^*$ as a subvariety of $\mathbb{A}_{\mathbb{C}}^{n_1 + n_2}$. 
\begin{lem}
\label{Lemma on the B rank}
Let $G_1(\mathbf{x}; \mathbf{y}), \ldots, G_R(\mathbf{x}; \mathbf{y}) \in \mathbb{Z}[x_1, \ldots, x_{n_1}, y_1, \ldots, y_{n_2}]$ be bihomogeneous of bidegree $(d_1, d_2)$. Let $0 \leq s < n_1$ and $0 \leq t < n_2$. For each $1 \leq r \leq R$, let
$$
\mathfrak{F}_{r}( x_{s + 1}, \ldots, x_{n_1};  y_{t + 1}, \ldots, y_{n_2} ) =
{G}_{r}( 0, \ldots, 0,  x_{s + 1}, \ldots, x_{n_1};  0, \ldots, 0,  y_{t + 1}, \ldots, y_{n_2} ).
$$
Then we have
$$
\min \{ \textnormal{codim } V_{\boldsymbol{\mathfrak{F}}, 1}^*, \textnormal{codim }V_{\boldsymbol{\mathfrak{F}}, 2}^* \}
\geq \min \{ \textnormal{codim } V_{\mathbf{G}, 1}^*, \textnormal{codim } V_{\mathbf{G}, 2}^* \} - (s + t) (R + 1).
$$
\end{lem}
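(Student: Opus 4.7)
The plan is to proceed by induction on $s+t$. The case $s=t=0$ is trivial, so it suffices to show that specializing a single variable (say $x_1=0$; the case $y_1=0$ is symmetric) decreases $\min\{\textnormal{codim } V_{\mathbf{G},1}^*, \textnormal{codim } V_{\mathbf{G},2}^*\}$ by at most $R+1$. Let $\mathbf{G}'$ denote the specialized system; we must bound the drop in $\textnormal{codim } V_{\mathbf{G}',i}^*$ for $i=1,2$ independently.

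The case $i=2$ is routine: setting $x_1=0$ removes no columns from $M_2 = (\partial G_r/\partial y_j)$ and only restricts its entries to $L = \{x_1 = 0\} \subset \mathbb{C}^{n_1+n_2}$, so the natural lift of $V_{\mathbf{G}',2}^*$ to $L$ coincides with $V_{\mathbf{G},2}^* \cap L$. The trivial inequality $\dim(V_{\mathbf{G},2}^* \cap L) \leq \dim V_{\mathbf{G},2}^*$ then gives $\textnormal{codim } V_{\mathbf{G}',2}^* \geq \textnormal{codim } V_{\mathbf{G},2}^* - 1$, well within the budget of $R+1$.

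The case $i=1$, in which $M_1$ loses its first column, is the heart of the argument. Write $W = V_{\mathbf{G},1}^*$, and let $\widetilde{V}' \subset L$ denote the lift of $V_{\mathbf{G}',1}^*$. The goal is to prove $\dim \widetilde{V}' \leq \dim W + R$, which upon accounting for the ambient codimension of $L$ translates to $\textnormal{codim } V_{\mathbf{G}',1}^* \geq \textnormal{codim } V_{\mathbf{G},1}^* - (R+1)$. The key device is the column map $\Phi \colon L \to \mathbb{C}^R$ defined by $\Phi = (\partial G_r/\partial x_1 \mid_{x_1=0})_{r=1}^R$, namely the first column of $M_1|_L$. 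Whenever $p \in \widetilde{V}'$ satisfies $\Phi(p)=0$, the full Jacobian $M_1(p)$ and its truncation $M_1'(p)$ have equal rank (less than $R$), so $p \in W \cap L$; hence $\widetilde{V}' \cap \{\Phi=0\} \subseteq W \cap L$. Since $\{\Phi = 0\}$ is defined by $R$ bihomogeneous equations and every irreducible component of $\widetilde{V}'$ is a cone containing the origin (as $\widetilde{V}'$ is cut out in $L$ by bihomogeneous minors of $M_1'$), the second part of Lemma \ref{lemma on dim} applies and yields
\[
\dim \widetilde{V}' \leq \dim(\widetilde{V}' \cap \{\Phi = 0\}) + R \leq \dim(W \cap L) + R \leq \dim W + R.
\]

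The main obstacle is precisely this $i=1$ step. A direct comparison would try to cut $\widetilde{V}'$ down to $W \cap L$ via the extra $R \times R$ minors of $M_1$ that involve column 1, of which there are $\binom{n_1-1}{R-1}$; this yields an estimate far too weak. Replacing these many minors by the $R$ components of $\Phi$ exploits the observation that forcing the first column of $M_1$ to vanish identically is stronger than (and implies) the required rank condition, yet uses only $R$ equations. Combining the two cases with the symmetric analysis for $y$-specializations and iterating $s+t$ times produces the stated loss of $(s+t)(R+1)$.
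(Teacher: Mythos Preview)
Your proposal is correct and follows essentially the same approach as the paper. The paper likewise reduces to the single-variable specialization $s=1$, $t=0$, and for $i=1$ introduces the variety $W$ cut out by the $R$ entries of the first column of $\textnormal{Jac}_{\mathbf{G},1}|_{x_1=0}$ (your $\{\Phi=0\}$), observes the containment $\{x_1=0\}\times(V^*_{\boldsymbol{\mathfrak{F}},1}\cap W)\subseteq V^*_{\mathbf{G},1}\cap V(x_1)$, and applies Lemma~\ref{lemma on dim} to obtain $\dim V^*_{\boldsymbol{\mathfrak{F}},1}-R\le\dim V^*_{\mathbf{G},1}$; the $i=2$ case is handled identically to yours.
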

\begin{proof}
We consider the case $s = 1$ and $t = 0$ as the general case follows by repeating the argument for this case. It is clear from the definition that
$\textnormal{Jac}_{\boldsymbol{\mathfrak{F}}, 1}$ is obtained by removing the first column from $\textnormal{Jac}_{\mathbf{G}, 1} |_{x_1 = 0}$.
Let $W$ be the affine variety in $\mathbb{A}^{n_1 - 1+ n_2}_{\mathbb{C}}$ defined by the entries of the first column of $\textnormal{Jac}_{\mathbf{G}, 1}|_{x_1 = 0}$. In particular, $W$ is defined by $R$ homogeneous polynomials, and hence $\textnormal{codim } W \leq R$.
Let $\lambda_1(\mathbf{x}, \mathbf{y}), \ldots, \lambda_{K_1}(\mathbf{x}, \mathbf{y})$ denote
the determinants of matrices formed by $R$ columns of $\textnormal{Jac}_{\mathbf{G}, 1}$.
Then we see that $V_{\mathbf{G}, 1}^*$ is defined by these polynomials.
Take a point
$$
(0, \widetilde{\mathbf{x}}_0, \mathbf{y}_0) = (0, x_{0,2}, \ldots, x_{0,n_1}, y_{0,1}, \ldots, y_{0, n_2}) \in \{ x_1 \in \mathbb{C} : x_1 = 0 \} \times ( V_{\boldsymbol{\mathfrak{F}}, 1}^* \cap W).
$$
Let $1 \leq k \leq K_1$. Suppose $\lambda_{k} (\mathbf{x}, \mathbf{y})$ corresponds to $R$ columns of $\textnormal{Jac}_{\mathbf{G}, 1}$
which contains the first column. Then since every entry of the first column of $\textnormal{Jac}_{\mathbf{G}, 1}$ is $0$ at $(0, \widetilde{\mathbf{x}}_0, \mathbf{y}_0)$,
we have $\lambda_{k} (0, \widetilde{\mathbf{x}}_0, \mathbf{y}_0) = 0$. On the other hand, suppose $\lambda_{k} (\mathbf{x}, \mathbf{y})$ corresponds to a collection of $R$ columns which does not contain the first column. In this case $\lambda_{k} (0, x_2, \ldots,x_{n_1}, \mathbf{y})$ is the determinant of
one of the matrices formed by taking $R$ columns of $\textnormal{Jac}_{\boldsymbol{\mathfrak{F}}, 1}$, and hence $( \widetilde{\mathbf{x}}_0, \mathbf{y}_0)$ is a zero of this polynomial. Thus we have $\lambda_{k} (0, \widetilde{\mathbf{x}}_0, \mathbf{y}_0) = 0$ in this case as well.
Therefore, we have shown that
$$
\{ \mathbf{0} \} \subseteq \{ x_1 \in \mathbb{C} :  x_1 = 0 \} \times (V_{\boldsymbol{\mathfrak{F}}, 1}^* \cap W)   \subseteq V_{\mathbf{G}, 1}^*
\cap  V (x_1)  \subseteq \mathbb{A}_{\mathbb{C}}^{n_1 + n_2}.
$$
We know that $\dim  (V_{\mathbf{G}, 1}^*  \cap V(x_1)  )$ is either $(\dim V_{\mathbf{G}, 1}^* - 1)$ or $\dim V_{\mathbf{G}, 1}^*$.
By Lemma \ref{lemma on dim} we obtain $ \dim V_{\boldsymbol{\mathfrak{F}}, 1}^* - R \leq \dim V_{\mathbf{G}, 1}^*,$
and consequently
$
\textnormal{codim } V_{\boldsymbol{\mathfrak{F}}, 1}^* \geq \textnormal{codim } V_{\mathbf{G}, 1}^* - (R + 1).
$

Next we consider the case $i=2$. In this case $\textnormal{Jac}_{\boldsymbol{\mathfrak{F}}, 2}$ is obtained by setting $x_1 = 0$ in $\textnormal{Jac}_{\mathbf{G}, 2}$. Thus we have
$$
\{ \mathbf{0} \} \subseteq  \{ x_1 \in \mathbb{C} : x_1 = 0 \} \times V_{\boldsymbol{\mathfrak{F}}, 2}^* \subseteq V_{\mathbf{G}, 2}^*  \cap V(x_1)  \subseteq \mathbb{A}_{\mathbb{C}}^{n_1 + n_2}.
$$
Therefore, it follows that
$\dim V_{\boldsymbol{\mathfrak{F}}, 2}^* \leq \dim  V_{\mathbf{G}, 2}^*$,
and consequently we have
$
\textnormal{codim } V_{\boldsymbol{\mathfrak{F}}, 2}^* \geq \textnormal{codim } V_{\mathbf{G}, 2}^* - 1.
$
Our result is then immediate.
\end{proof}

By applying Cauchy-Schwarz inequality we obtain
\begin{eqnarray}
|S(\boldsymbol{\alpha})|^2
\notag
\ll (\log P_1)^{n_1} P_1^{n_1}  \sum_{\mathbf{y}, \mathbf{y}' \in [0,P_2]^{n_2} }  \Lambda^*(\mathbf{y}) \Lambda^*( \mathbf{y}') \sum_{\mathbf{x} \in [0,P_1]^{n_1} } e \left( \sum_{r=1}^R  \alpha_r (g_r(\mathbf{x} ; \mathbf{y} ) -  g_r(\mathbf{x} ; \mathbf{y}' ) )  \right).
\notag
\end{eqnarray}
We then apply Cauchy-Schwarz inequality once more and obtain
\begin{eqnarray}
\label{sum1}
|S(\boldsymbol{\alpha})|^4
\ll (\log P_1)^{2n_1} (\log P_2)^{2n_2} P_1^{2 n_1} P_2^{2n_2} \sum_{\mathbf{x}, \mathbf{x}' \in [0,P_1]^{n_1} }  \  \sum_{\mathbf{y}, \mathbf{y}' \in [0,P_2]^{n_2} } \
e \left( \sum_{r=1}^R \alpha_r \cdot \mathfrak{d}_r(\mathbf{x}, \mathbf{x}' ; \mathbf{y}, \mathbf{y}' )  \right),
\end{eqnarray}
where
\begin{equation}
\label{defn of g}
\mathfrak{d}_r(\mathbf{x}, \mathbf{x}' ; \mathbf{y}, \mathbf{y}' ) = g_r(\mathbf{x} ; \mathbf{y} ) -  g_r(\mathbf{x} ; \mathbf{y}' )
- g_r(\mathbf{x}' ; \mathbf{y} ) +  g_r(\mathbf{x}' ; \mathbf{y}'  ).
\end{equation}
In order to simplify our notation we denote $\mathbf{u} = (\mathbf{x}, \mathbf{x}')$ and $\mathbf{v} = (\mathbf{y}, \mathbf{y}')$,
and write the sum on the right hand side of (\ref{sum1}) as
$$
T(\boldsymbol{\alpha}) := \sum_{\mathbf{u} \in [0, P_1]^{2n_1} }  \  \sum_{\mathbf{v} \in [0, P_2]^{2 n_2} } \
e \left(  \sum_{r=1}^R  \alpha_r \mathfrak{d}_r(\mathbf{u} ; \mathbf{v} )  \right).
$$
It is clear from the definition of the polynomial $\mathfrak{d}_r(\mathbf{u}; \mathbf{v})$ given in (\ref{defn of g}) that
it is a degree $(d_1 + d_2)$ polynomial (in $\mathbf{u}$ and $\mathbf{v}$) whose homogeneous degree $(d_1 + d_2)$ portion is
$$
\mathfrak{D}_r(\mathbf{u}; \mathbf{v}) = \mathfrak{D}_r(\mathbf{x}, \mathbf{x}' ; \mathbf{y}, \mathbf{y}' ) = G_r(\mathbf{x} ; \mathbf{y} ) -  G_r(\mathbf{x} ; \mathbf{y}' )
- G_r(\mathbf{x}' ; \mathbf{y} ) +  G_r(\mathbf{x}' ; \mathbf{y}').
$$
It is then immediate that $\mathfrak{D}_r(\mathbf{u}; \mathbf{v})$ is a bihomogeneous form  of bidegree $(d_1, d_2)$.

Note we have
\begin{equation}
\label{deriv of G-1}
\frac{\partial  \mathfrak{D}_r}{\partial x_j} (\mathbf{x}, \mathbf{x}' ; \mathbf{y}, \mathbf{y}' ) =
\frac{\partial G_r}{\partial x_j} (\mathbf{x} ; \mathbf{y} ) -  \frac{\partial G_r}{\partial x_j} (\mathbf{x} ; \mathbf{y}' ).
\end{equation}
Let $M_1$ be the matrix obtained by removing $n_1$ columns corresponding to $\mathbf{x}'$ (that is $(n_1 + 1)$-th column to $(2 n_1)$-th column) from $\textnormal{Jac}_{\boldsymbol{\mathfrak{D}}, 1}$. It is clear from (\ref{deriv of G-1}) that $M_1$ is independent of $\mathbf{x}'$. Let
$
V_{M_1}^* = \left\{ (\mathbf{x}, \mathbf{y}, \mathbf{y}' ) \in \mathbb{C}^{n_1 + 2 n_2} :  \textnormal{rank } M_1 < R   \right\}.
$
Since $M_1 |_{\mathbf{y}' = \mathbf{0} }$ is precisely $\textnormal{Jac}_{\mathbf{G}, 1}$, we have
$(\mathbf{x}, \mathbf{y}) \in V^*_{\mathbf{G}, 1}$ if and only if $( \mathbf{x}, \mathbf{y}, \mathbf{0}) \in V_{M_1}^*$.
Therefore, we see that
$$
V_{\mathbf{G}, 1}^* \times \{ \mathbf{y}' \in \mathbb{C}^{n_2} :  \mathbf{y}' = \mathbf{0} \} \times \{ \mathbf{x}' \in  \mathbb{C}^{n_1} \}   = (V_{M_1}^*  \times \{ \mathbf{x}' \in  \mathbb{C}^{n_1} \}) \cap V(y'_1, \ldots, y'_{n_2}) \subseteq \mathbb{A}_{\mathbb{C}}^{2n_1+ 2 n_2}.
$$
Let $W = \{ (\mathbf{x}, \mathbf{y}, \mathbf{y}', \mathbf{x}') \in \mathbb{C}^{2 n_1 + 2 n_2} : (\mathbf{x}, \mathbf{x}', \mathbf{y}, \mathbf{y}') \in V_{\boldsymbol{\mathfrak{D}},1}^{*} \}$. Then $\dim W = \dim V_{\boldsymbol{\mathfrak{D}},1}^{*}$.
Since $M_1$ is a submatrix of $\textnormal{Jac}_{\boldsymbol{\mathfrak{D}}, 1}$ we have
$
W \subseteq V_{M_1}^* \times  \{ \mathbf{x}' \in  \mathbb{C}^{n_1} \}.$ Therefore, it follows that
$$
\{ \mathbf{0} \} \subseteq W \cap V(y'_1, \ldots, y'_{n_2} ) \subseteq V_{\mathbf{G}, 1}^*  \times \{ \mathbf{y}' \in \mathbb{C}^{n_2} : \mathbf{y}' = \mathbf{0} \}\times \{ \mathbf{x}' \in  \mathbb{C}^{n_1} \}   \subseteq \mathbb{A}_{\mathbb{C}}^{2 n_1+ 2 n_2}.
$$
Consequently, by Lemma \ref{lemma on dim} we obtain $\dim V_{\boldsymbol{\mathfrak{D}}, 1}^*  - n_2 \leq n_1 + \dim  V_{ \mathbf{G},  1}^*$, which is equivalent to
\begin{eqnarray}
\label{new2.11}
\textnormal{codim } V_{\boldsymbol{\mathfrak{D}}, 1}^*  = (2n_1 + 2 n_2) - \dim V_{\boldsymbol{\mathfrak{D}}, 1}^* \geq n_1 + n_2 - \dim V_{\mathbf{G}, 1}^* = \textnormal{codim } V_{\mathbf{G}, 1}^*.
\end{eqnarray}
By reversing the roles of $\mathbf{x}$ and $\mathbf{x}'$ with that of $\mathbf{y}$ and $\mathbf{y}'$, we also obtain
$
\textnormal{codim } V_{\boldsymbol{\mathfrak{D}}, 2}^*  \geq \textnormal{codim } V_{\mathbf{G}, 2}^*.
$
Therefore, it follows from (\ref{assmpn on F}) that
$$
\textnormal{codim } V_{\boldsymbol{\mathfrak{D}}, i}^*  > 2^{d_1 + d_2} \max\{  2 R(R+1)(d_1 + d_2 -1), R( \mathfrak{b} d_1 + d_2)  \} \ \ \ (i = 1, 2).
$$

Let $\delta_0 > 0$ be a sufficiently small constant. We now define the following constant
\begin{equation}
\label{def K}
K := \frac{ \min\{  \textnormal{codim } V_{\boldsymbol{\mathfrak{D}}, 1}^*, \textnormal{codim } V_{\boldsymbol{\mathfrak{D}}, 2}^*  \} - \delta_0 }{2^{d_1 + d_2 - 2}}.
\end{equation}
In particular, we have
\begin{equation}
\label{lower bdd on K}
K  > 4 \max \{ 2 R (R+1)(d_1 + d_2 -1),  R(\mathfrak{b} d_1 + d_2) \}.
\end{equation}
%
%

We make use of the following generalization of \cite[Lemma 4.3]{DS} which gives us an exponential sum estimate on the minor arcs.
We remark that owing to a minor oversight in \cite[pp. 498]{DS}, the presence of $\delta_0$ in the statement is necessary.
Since the lemma can be obtained by following the argument of \cite[Lemma 4.3]{DS} in our setting, we omit the details.
We shall refer to $\mathfrak{B} \subseteq \mathbb{R}^m$ as a box, if $\mathfrak{B}$ is of the form
$
\mathfrak{B} = I_1 \times \cdots \times I_m,
$
where each $I_j$ is a closed or open or half open/closed interval $(1 \leq j \leq m)$.
\begin{lem}\cite[Lemma 4.3]{DS}
\label{Lemma 4.3 in DS}
Let $\mathbf{u} = (u_1, \ldots, u_{m_1})$ and $\mathbf{v} = (v_1, \ldots, v_{m_2})$. Let $\mathfrak{B}_i \subseteq \mathbb{R}^{m_i}$ be a box with sides $\leq 1$
$(i=1,2)$. Let $\mathfrak{f}_1(\mathbf{u}; \mathbf{v}), \ldots,$ $\mathfrak{f}_R(\mathbf{u}; \mathbf{v})$ be degree $(d_1 + d_2)$ polynomials with rational coefficients and let their degree $(d_1 + d_2)$ homogeneous portions be $\mathfrak{F}_1(\mathbf{u}; \mathbf{v}), \ldots, \mathfrak{F}_R(\mathbf{u}; \mathbf{v})$
respectively. For each $1 \leq r \leq R$, suppose $\mathfrak{F}_r(\mathbf{u}; \mathbf{v})$ is a bihomogeneous form of bidegree $(d_1, d_2)$ with integer coefficients. Let $\delta_0 > 0$ be a sufficiently small constant. Let $P = P_1^{d_1} P_2^{d_2}$, $1 \leq \mathfrak{b} = \frac{\log P_1}{ \log P_2}$, $0 < \vartheta \leq (\mathfrak{b} d_1 + d_2)^{-1}$, and
$$
\widetilde{K} = \frac{\min \left\{ \textnormal{codim}
(V^*_{\boldsymbol{ \mathfrak{F} }, 1}),  \textnormal{codim}
(V^*_{\boldsymbol{ \mathfrak{F} }, 2} ) \right\} - \delta_0 }{ 2^{d_1 + d_2 - 2} }.
$$
Consider the exponential sum
$$
\widetilde{T}(\boldsymbol{\alpha}) = \sum_{\mathbf{u} \in P_1 \mathfrak{B}_1 }  \  \sum_{\mathbf{v} \in P_2 \mathfrak{B}_2  } \
e \left( \sum_{r=1}^R \alpha_r \mathfrak{f}_r(\mathbf{u} ; \mathbf{v} )  \right).
$$
Then we have either
$$
\textnormal{(i)  } \boldsymbol{\alpha} \in \mathfrak{M}(\vartheta) \ \ \
\textnormal{  or  } \ \ \
\textnormal{(ii)  } |\widetilde{T}(\boldsymbol{\alpha})| \ll P_1^{m_1} P_2^{m_2} P^{- \widetilde{K} \vartheta} (\log P)^{m_1}.
$$
Here the implicit constant is independent of $\vartheta$, and it is also independent of the coefficients of $(\mathfrak{f}_r(\mathbf{u}; \mathbf{v}) - \mathfrak{F}_r(\mathbf{u}; \mathbf{v}))$ for each $1 \leq r \leq R$.
\end{lem}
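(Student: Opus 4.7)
The plan is to follow the Birch--Davenport Weyl differencing argument in its bihomogeneous formulation by Schindler \cite[Lemma 4.3]{DS}. A first observation is that the lower-order terms $\mathfrak{f}_r - \mathfrak{F}_r$ contribute only polynomials of strictly lower degree at each iterated difference, and hence disappear after sufficiently many differencing steps; this is exactly why the implicit constant can be chosen independently of their coefficients, and it justifies reducing to the bihomogeneous part $\boldsymbol{\mathfrak{F}}$ from the outset.

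First I would perform the Weyl differencing via repeated Cauchy--Schwarz, distributing the differences so as to respect the bidegree structure: roughly $d_1 - 1$ differences in the $\mathbf{u}$-variable and $d_2 - 1$ differences in the $\mathbf{v}$-variable. After these steps one obtains a bilinear exponential sum of the form
$$
|\widetilde{T}(\boldsymbol{\alpha})|^{2^{d_1+d_2-2}} \ll (P_1^{m_1} P_2^{m_2})^{2^{d_1+d_2-2}-1} \sum_{\mathbf{h}, \mathbf{k}} \Bigl| \sum_{\mathbf{u}, \mathbf{v}} e\bigl( \boldsymbol{\alpha} \cdot \boldsymbol{\Phi}(\mathbf{h}, \mathbf{k}; \mathbf{u}, \mathbf{v}) \bigr) \Bigr|,
$$
where $\mathbf{h}, \mathbf{k}$ run over the difference parameters in the two groups of variables and $\boldsymbol{\Phi}_r$ is the associated multilinear form obtained from $\mathfrak{F}_r$ by the iterated symmetric difference operator. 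Next I would invoke Davenport's shrinking lemma to convert the inner exponential sum into a count of integer tuples on which each component of $\boldsymbol{\Phi}$ is simultaneously small modulo $1$, reducing to the problem of counting integer points in a box that satisfy an associated system of bilinear inequalities.

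The key geometric input, and the step where the hypothesis on $\textnormal{codim}\, V^*_{\boldsymbol{\mathfrak{F}}, i}$ is consumed, is a bihomogeneous analogue of Birch's dimension-counting lemma relating the number of integer zeros of the multilinear system in a box to the rank of the associated Jacobian. One bounds the integer-point count in two different ways: once by holding the $\mathbf{u}$-group fixed and invoking $\textnormal{codim}\, V^*_{\boldsymbol{\mathfrak{F}}, 1}$, and once by holding the $\mathbf{v}$-group fixed and invoking $\textnormal{codim}\, V^*_{\boldsymbol{\mathfrak{F}}, 2}$, in the same spirit as the codimension comparison already performed in \eqref{new2.11}. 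Taking the minimum of the two estimates produces the exponent $\widetilde{K}$. When the resulting bound is worse than the alternative (ii), a Dirichlet approximation argument together with a standard gcd reduction delivers simultaneous rational approximations $a_r/q$ to the $\alpha_r$ with denominator $\leq P^{R(d_1+d_2-1)\vartheta}$, which is precisely the defining condition for $\boldsymbol{\alpha} \in \mathfrak{M}(\vartheta)$.

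The main obstacle is the bihomogeneous dimension count and the careful tracking of how the codimension hypothesis propagates through the iterated differencing; the slack $\delta_0$ in the definition of $\widetilde{K}$ is what absorbs the rounding losses in the geometry-of-numbers step noted in \cite[pp. 498]{DS}. Once this is set up, the uniformity of the implicit constant in $\vartheta$ and in the coefficients of the lower-order parts follows automatically, since neither quantity enters the bilinear shape produced by the differencing.
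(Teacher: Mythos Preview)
Your outline is correct and matches the approach the paper intends: the paper does not supply its own proof of this lemma but simply refers the reader to the argument of \cite[Lemma 4.3]{DS}, noting only that the slack $\delta_0$ is needed to repair a minor oversight there and that the lower-order terms play no role. Your sketch of the Weyl--Cauchy--Schwarz differencing, the bihomogeneous dimension count via $\textnormal{codim}\,V^*_{\boldsymbol{\mathfrak{F}},i}$, and the Dirichlet approximation dichotomy is precisely that argument, so nothing further is required.
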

We remark that the hypotheses in the statement of Lemma \ref{Lemma 4.3 in DS} are sufficient and the additional assumption \cite[lines 1-2, pp.488]{DS}
is in fact unnecessary; this can be verified by going through the proof of \cite[Lemma 4.3]{DS} and observing that the expression in \cite[line 22, pp.496]{DS}
is a multilinear form with integer coefficients due to the factor $d_1! d_2!$ as long as $F_1, \ldots, F_R$ have integer coefficients.
We note the fact that the implicit constant is independent of the lower degree terms of $\mathfrak{f}_r(\mathbf{u}; \mathbf{v})$ becomes crucial when we apply this lemma in Section \ref{sec major arcs}. We have the following exponential sum estimate as a corollary which we also use in Section \ref{sec major arcs}.
\begin{cor}
\label{cor 1 of Lemma 4.3 in DS}
Make all the assumptions of Lemma \ref{Lemma 4.3 in DS}.
Suppose $\gcd(q, a_1, \ldots, a_R) = 1$.
Then for any $\varepsilon > 0$ we have
\begin{eqnarray}
\notag
\sum_{ \substack{ \mathbf{u} \in [0, q-1]^{m_1} \\  \mathbf{v} \in [0, q-1]^{m_2} } } e \left( \sum_{r=1}^{R} \mathfrak{f}_{r} (\mathbf{u};\mathbf{v})  \cdot a_{r} /q  \right) &\ll& q^{m_1 + m_2 - \frac{\widetilde{K}}{ R(d_1 + d_2 -1)  } + \varepsilon }.
\end{eqnarray}
\end{cor}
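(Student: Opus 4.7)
The plan is to apply Lemma \ref{Lemma 4.3 in DS} with the specialization $P_1 = P_2 = q$ (so that $\mathfrak{b} = 1$ and $P = q^{d_1+d_2}$), with $\mathfrak{B}_1 = [0,1)^{m_1}$, $\mathfrak{B}_2 = [0,1)^{m_2}$, and with the rational point $\boldsymbol{\alpha} = (a_1/q, \ldots, a_R/q)$. Under this choice, the integer points in $P_1 \mathfrak{B}_1$ and $P_2 \mathfrak{B}_2$ are precisely $\{0,1,\ldots,q-1\}^{m_1}$ and $\{0,1,\ldots,q-1\}^{m_2}$, so the exponential sum $\widetilde{T}(\boldsymbol{\alpha})$ in the statement of that lemma coincides exactly with the complete exponential sum we wish to bound.

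Given $\varepsilon > 0$, I would select
$$
\vartheta \;=\; \frac{1}{(d_1+d_2)R(d_1+d_2-1)} - \eta
$$
for a sufficiently small $\eta > 0$ depending on $\varepsilon$. This choice satisfies $0 < \vartheta \leq (\mathfrak{b}d_1 + d_2)^{-1}$, and in alternative (ii) of Lemma \ref{Lemma 4.3 in DS} it yields
$$
|\widetilde{T}(\boldsymbol{\alpha})| \;\ll\; q^{m_1+m_2 - (d_1+d_2)\widetilde{K}\vartheta}\,(\log q)^{m_1} \;\leq\; q^{m_1 + m_2 - \widetilde{K}/(R(d_1+d_2-1)) + \varepsilon},
$$
once the logarithmic factor is absorbed into $q^{\varepsilon/2}$ for large $q$. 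This is the target bound.

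The remaining task is to rule out alternative (i), namely $\boldsymbol{\alpha}\in\mathfrak{M}(\vartheta)$, except when $q$ is bounded by a constant (for which the trivial bound $|\widetilde{T}(\boldsymbol{\alpha})| \leq q^{m_1+m_2}$ is absorbed into the implicit $\ll$-constant). If $\boldsymbol{\alpha}\in\mathfrak{M}(\vartheta)$, there exist $1 \leq q' \leq q^{(d_1+d_2)R(d_1+d_2-1)\vartheta}$ and integers $a'_1, \ldots, a'_R$ with $\gcd(q', a'_1, \ldots, a'_R) = 1$ such that
$$
|q'a_r - q a'_r| \;\leq\; (q/2)\, q^{-(d_1+d_2)+(d_1+d_2)R(d_1+d_2-1)\vartheta} \qquad (1 \leq r \leq R).
$$
Since each $q'a_r - qa'_r$ is an integer, I split into two cases. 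If every difference vanishes, then $q \mid q' a_r$ for all $r$, and an elementary argument using $\gcd(q, a_1, \ldots, a_R) = 1$ shows $q \mid q'$; hence $q \leq q^{1-(d_1+d_2)R(d_1+d_2-1)\eta}$, which bounds $q$ by a constant for $\eta > 0$. Otherwise some $|q'a_r - qa'_r|\geq 1$; since $\vartheta < 1/((d_1+d_2)R)$ (using $d_1 + d_2 \geq 3$), the resulting inequality $1 \leq (1/2)\, q^{1-(d_1+d_2)+(d_1+d_2)R(d_1+d_2-1)\vartheta}$ has a strictly negative exponent on the right-hand side and again forces $q$ to be bounded.

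The main obstacle is the bookkeeping in this divisibility step together with the choice of $\vartheta$ at precisely the threshold where (ii) delivers the claimed exponent while (i) is left degenerate; once the two cases collapse onto bounded $q$, the trivial bound handles them and Lemma \ref{Lemma 4.3 in DS} handles the rest, giving the corollary.
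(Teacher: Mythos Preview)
Your proposal is correct and follows essentially the same approach as the paper (which in turn defers to \cite[Lemma 5.5]{DS}): apply Lemma~\ref{Lemma 4.3 in DS} with $P_1=P_2=q$, $\mathfrak{B}_i=[0,1)^{m_i}$, $\boldsymbol{\alpha}=\mathbf{a}/q$, and $\vartheta$ just below $1/((d_1+d_2)R(d_1+d_2-1))$, then rule out alternative~(i) by the integrality argument you describe. The only cosmetic difference is that the paper's choice of $\vartheta$ forces the right-hand side of the approximation inequality to be strictly less than $1$ outright, collapsing your two cases into a single direct contradiction.
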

\begin{proof}
See the proof of \cite[Lemma 5.5]{DS}.
\end{proof}

\section{The minor arcs estimate}
\label{sec minor arcs}
From the bound (\ref{sum1}) we have
$
|S(\boldsymbol{\alpha})|^4 \ll (\log P_1)^{2n_1} (\log P_2)^{2n_2} P_1^{2 n_1} P_2^{2n_2} \ T(\boldsymbol{\alpha}).
$
Thus the following is an immediate consequence of applying Lemma \ref{Lemma 4.3 in DS} to $T(\boldsymbol{\alpha})$.
\begin{lem}
\label{Lemma 4.3 in DS ver2}
Let $K$ be as in (\ref{def K}) and $0 < \vartheta \leq (\mathfrak{b} d_1 + d_2)^{-1}$.
Then we have either
$$
\textnormal{(i)  } \boldsymbol{\alpha} \in \mathfrak{M}(\vartheta) \ \ \  \textnormal{  or  } \ \ \  \textnormal{(ii)  } |S(\boldsymbol{\alpha})| \ll P_1^{n_1} P_2^{n_2} P^{- K \vartheta / 4} (\log P)^{n_1 + \frac{n_2}{2}}.
$$
Here the implicit constant is independent of $\vartheta$.
\end{lem}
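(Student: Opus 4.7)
The plan is to feed Lemma \ref{Lemma 4.3 in DS}, applied to the exponential sum $T(\boldsymbol{\alpha})$, into the fourth--power bound
\begin{equation*}
|S(\boldsymbol{\alpha})|^4 \ll (\log P_1)^{2n_1} (\log P_2)^{2n_2} P_1^{2n_1} P_2^{2n_2}\, T(\boldsymbol{\alpha})
\end{equation*}
derived above from two applications of Cauchy--Schwarz, and then to extract the estimate on $|S(\boldsymbol{\alpha})|$ by taking fourth roots.

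Concretely, I would invoke Lemma \ref{Lemma 4.3 in DS} with $(m_1, m_2) = (2n_1, 2n_2)$, boxes $\mathfrak{B}_1 = [0,1]^{2n_1}$ and $\mathfrak{B}_2 = [0,1]^{2n_2}$, polynomials $\mathfrak{f}_r = \mathfrak{d}_r$, and their homogeneous degree $(d_1+d_2)$ portions $\mathfrak{F}_r = \mathfrak{D}_r$. By construction each $\mathfrak{D}_r$ is bihomogeneous of bidegree $(d_1, d_2)$ with integer coefficients, so the structural requirement of the lemma is met, and the parameter $\vartheta$ lies in the same admissible range. Crucially, the constant $\widetilde{K}$ produced by Lemma \ref{Lemma 4.3 in DS} in this setting is exactly the constant $K$ of (\ref{def K}), since both are built from $\min\{\textnormal{codim}\, V^*_{\boldsymbol{\mathfrak{D}}, 1},\, \textnormal{codim}\, V^*_{\boldsymbol{\mathfrak{D}}, 2}\}$.

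Assuming $\boldsymbol{\alpha} \notin \mathfrak{M}(\vartheta)$, Lemma \ref{Lemma 4.3 in DS} will then give
\begin{equation*}
|T(\boldsymbol{\alpha})| \ll P_1^{2n_1} P_2^{2n_2}\, P^{-K\vartheta}\, (\log P)^{2n_1},
\end{equation*}
and substituting into the fourth--power bound and taking fourth roots produces exactly the claimed estimate
\begin{equation*}
|S(\boldsymbol{\alpha})| \ll P_1^{n_1} P_2^{n_2}\, P^{-K\vartheta/4}\, (\log P)^{n_1 + n_2/2},
\end{equation*}
with the $\vartheta$--uniformity of the implicit constant inherited directly from Lemma \ref{Lemma 4.3 in DS}. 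I do not anticipate any real obstacle here: the only genuinely substantive input, namely the codimension comparison (\ref{new2.11}) showing that the singular loci of $\boldsymbol{\mathfrak{D}}$ are at least as well--behaved as those of $\mathbf{G}$, has already been dispatched in the preceding discussion, and what remains is a purely mechanical combination of ingredients on hand.
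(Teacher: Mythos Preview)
Your proposal is correct and follows essentially the same approach as the paper: the paper simply states that the lemma is an immediate consequence of applying Lemma~\ref{Lemma 4.3 in DS} to $T(\boldsymbol{\alpha})$ and combining with the Cauchy--Schwarz bound (\ref{sum1}), which is precisely what you carry out in detail.
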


We define
$$
\sigma := \frac12 \left( \frac{K}{4} -   \max \{ 2 R (R+1)(d_1 + d_2 -1),  R(\mathfrak{b} d_1 + d_2)    \} \right),
$$
which we know to be positive because of (\ref{lower bdd on K}).
Let us fix $\vartheta_0$ satisfying
\begin{equation}
\label{bdd on theta0}
0 < \vartheta_0 \leq (\mathfrak{b} d_1 + d_2)^{-1} \ \ \text{  and  } \ \ \frac{  \vartheta_0 K}{4} > R + \varepsilon_0
\end{equation}
for some $\varepsilon_0 > 0$ sufficiently small, which is possible because of (\ref{lower bdd on K}).

Let us set
$$
\zeta : = \frac{4 R (R+1)(d_1 + d_2 -1) + 4 \sigma }{ K },
$$
which can be verified to satisfy $0 < \zeta < 1$. 
Throughout Sections \ref{sec minor arcs} and \ref{sec major arcs} we let $C$ to be a sufficiently large positive constant which does not depend on $P$.
Let us define $\vartheta_{i+1} = \zeta \vartheta_i$ $(0 \leq i \leq M-1)$, where
$M$ is the smallest positive integer such that $P^{\vartheta_M} \leq (\log P)^{C}$.
From the definition of $M$ it follows that
$
(\log P)^{C \zeta} < P^{\vartheta_M} = P^{ \zeta^{M} \vartheta_0 }, 
$
for otherwise we have $P^{\vartheta_{M-1}} =  P^{\vartheta_M / \zeta} \leq (\log P)^{C}$ and this is a contradiction.
We then obtain $M \ll \log \log P.$
We also remark that from the definition of $M$ we have
\begin{equation}
\label{bdd on M-1}
(\log P)^{C} < P^{\vartheta_{M-1}}.
\end{equation}

Let us use the notation $ 0 \leq  \mathbf{a} \leq q$ to mean $0 \leq a_r \leq q$ $(1 \leq r \leq R)$.
The Lebesgue measure of $\mathfrak{M}( \vartheta_i)$ is bounded by the following quantity
\begin{eqnarray}
\textnormal{meas}(\mathfrak{M}( \vartheta_i) )
&\ll& \sum_{ q \leq P^{R(d_1 + d_2 - 1) \vartheta_i} } \
\sum_{ \substack{  0 \leq  \mathbf{a} \leq q  \\  \gcd(q, a_1, \ldots, a_R) = 1 } } q^{-R} P_1^{-d_1 R} P_2^{-d_2 R} P^{R^2 (d_1 + d_2 - 1) \vartheta_i }
\label{star}
\\
\notag
&\ll& P^{- R + R(R+1)(d_1 + d_2 - 1) \vartheta_i}.
\end{eqnarray}
Thus for each $0 \leq i \leq M-1$, we have by Lemma \ref{Lemma 4.3 in DS ver2} that
\begin{eqnarray}
\int_{ \mathfrak{M}( \vartheta_i) \backslash  \mathfrak{M}( \vartheta_{i+1}) } | S(\boldsymbol{\alpha}) | \ \mathbf{d} \boldsymbol{\alpha}
&\ll&
(\log P)^{n_1+ \frac{n_2}{2}} P_1^{n_1} P_2^{n_2} P^{-   \frac{K \vartheta_{i+1}}{4}  } \  \textnormal{meas}(\mathfrak{M}( \vartheta_i) )
\label{ineq 1}
\\
&\ll&
(\log P)^{n_1 + \frac{n_2}{2} } P_1^{n_1} P_2^{n_2} P^{- R - \sigma \theta_i},
\notag
\end{eqnarray}
where we obtained the final inequality using (\ref{star}), the relation $\vartheta_{i+1} = \zeta \vartheta_i$, and the definition of $\zeta$.
Since
$$
\mathfrak{m}(\vartheta_M) \subseteq \mathfrak{m}(\vartheta_0) \ \bigcup \  \bigcup_{0 \leq i \leq M-1} \mathfrak{M}( \vartheta_i) \backslash  \mathfrak{M}( \vartheta_{i+1}),
$$
it follows from Lemma \ref{Lemma 4.3 in DS ver2} with $\vartheta_0$ and (\ref{ineq 1}) that
\begin{eqnarray}
\int_{ \mathfrak{m}(\vartheta_M)} | S(\boldsymbol{\alpha}) | \ \mathbf{d} \boldsymbol{\alpha}
\notag
&\ll&
\int_{ \mathfrak{m}( \vartheta_0)} | S(\boldsymbol{\alpha}) | \  \mathbf{d} \boldsymbol{\alpha} + M \max_{0 \leq i \leq M-1} \int_{ \mathfrak{M}( \vartheta_i) \backslash  \mathfrak{M}( \vartheta_{i+1}) } | S(\boldsymbol{\alpha}) | \ \mathbf{d} \boldsymbol{\alpha}
\notag
\\
&\ll&
P_1^{n_1} P_2^{n_2} P^{- R - \frac{\varepsilon_0}{2} } + (\log \log P)  (\log P)^{n_1+ \frac{n_2}{2} } P_1^{n_1} P_2^{n_2} P^{- R - \sigma \theta_{M-1}  }
\notag
\\
&\ll&
(\log P)^{n_1 + \frac{n_2}{2} - \sigma C} (\log \log P) P_1^{n_1} P_2^{n_2} P^{- R},
\notag
\end{eqnarray}
where we obtained the final inequality using (\ref{bdd on M-1}).
Therefore, we have established the following.
\begin{prop}
\label{prop minor}
Given any $c>0$, 
we have
$$
\int_{\mathfrak{m}(\theta_M)} S(\boldsymbol{\alpha} ) \ \mathbf{d} \boldsymbol{\alpha}
\ll  \frac{ P_1^{n_1} P_2^{n_2} P^{- R}}{(\log P)^c}.
$$
\end{prop}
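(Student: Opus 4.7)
The strategy is a pruning argument based on the nested chain $\mathfrak{M}(\vartheta_0) \supseteq \mathfrak{M}(\vartheta_1) \supseteq \cdots \supseteq \mathfrak{M}(\vartheta_M)$ with $\vartheta_{i+1} = \zeta \vartheta_i$, combined with the pointwise minor arcs bound of Lemma \ref{Lemma 4.3 in DS ver2}. First I would write
$$\mathfrak{m}(\vartheta_M) \subseteq \mathfrak{m}(\vartheta_0) \cup \bigcup_{i=0}^{M-1} \bigl(\mathfrak{M}(\vartheta_i) \setminus \mathfrak{M}(\vartheta_{i+1})\bigr)$$
and estimate the integral of $|S(\boldsymbol{\alpha})|$ over each piece separately.

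Over $\mathfrak{m}(\vartheta_0)$, Lemma \ref{Lemma 4.3 in DS ver2} supplies the pointwise bound $|S(\boldsymbol{\alpha})| \ll P_1^{n_1} P_2^{n_2} P^{-K\vartheta_0/4}(\log P)^{n_1+n_2/2}$, and the inequality $\vartheta_0 K/4 > R + \varepsilon_0$ built into (\ref{bdd on theta0}) gives a genuine power saving $P^{-R-\varepsilon_0/2}$, which beats $(\log P)^{-c}$ for any $c>0$. For each dyadic layer $\mathfrak{M}(\vartheta_i)\setminus\mathfrak{M}(\vartheta_{i+1})$, I would pair the Farey measure estimate $\mathrm{meas}(\mathfrak{M}(\vartheta_i)) \ll P^{-R + R(R+1)(d_1+d_2-1)\vartheta_i}$ against the pointwise bound coming from $\boldsymbol{\alpha} \notin \mathfrak{M}(\vartheta_{i+1})$; the choice $\zeta = (4R(R+1)(d_1+d_2-1) + 4\sigma)/K$ was engineered precisely so that the product of these exponents collapses to $-R - \sigma\vartheta_i$ up to logarithmic factors, with $\sigma > 0$ guaranteed by (\ref{lower bdd on K}).

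Summing over the $M \ll \log\log P$ layers, the worst case is $i = M-1$, contributing at most $(\log P)^{n_1+n_2/2} P_1^{n_1}P_2^{n_2} P^{-R - \sigma\vartheta_{M-1}}$. By the defining property $P^{\vartheta_{M-1}} > (\log P)^C$ from (\ref{bdd on M-1}), the leftover exponential factor is $\ll (\log P)^{-\sigma C}$. Combined with the $M \ll \log \log P$ from the union bound, the total contribution from the layers is $\ll (\log \log P)(\log P)^{n_1+n_2/2 - \sigma C} P_1^{n_1}P_2^{n_2}P^{-R}$; choosing $C$ sufficiently large relative to $c, n_1, n_2$, and $\sigma$ yields the claimed bound.

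There is essentially no analytic obstacle here beyond bookkeeping: the genuine input is Lemma \ref{Lemma 4.3 in DS ver2}, and the parameters $K, \sigma, \zeta, \vartheta_0$ have already been calibrated so that the geometric decay from the pointwise bound dominates both the polynomial-in-$\vartheta_i$ growth of $\mathrm{meas}(\mathfrak{M}(\vartheta_i))$ and the $(\log\log P)(\log P)^{n_1+n_2/2}$ logarithmic loss. The one remaining degree of freedom, namely $C$, is used to absorb the exponent $c$.
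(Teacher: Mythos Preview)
Your proposal is correct and follows essentially the same pruning argument as the paper: the decomposition of $\mathfrak{m}(\vartheta_M)$ into $\mathfrak{m}(\vartheta_0)$ and the layers $\mathfrak{M}(\vartheta_i)\setminus\mathfrak{M}(\vartheta_{i+1})$, the pairing of the measure bound with Lemma~\ref{Lemma 4.3 in DS ver2} at scale $\vartheta_{i+1}$ so that the exponent collapses to $-R-\sigma\vartheta_i$, the use of $M\ll\log\log P$ and $P^{\vartheta_{M-1}}>(\log P)^C$, and the final choice of $C$ large enough to absorb the logarithmic losses, all match the paper's proof line for line.
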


\section{The major arcs estimate}
\label{sec major arcs}
As the material in this section is fairly standard, we keep the details to a minimum and also refer the reader
to \cite[Sections 6 and 7]{CM} or \cite[Section 7]{Y} where similar work has been carried out.
Let us define $C_0$ by $P^{\vartheta_M} = (\log P)^{C_0}$. It is clear that $C_0$ depends on $P$; however, by the definition of $\theta_M$ we have
$C \zeta < C_0 \leq C$.
By the definition of $\mathfrak{M}(\vartheta_M)$ we can write
$$
\mathfrak{M}(\vartheta_M) = \bigcup_{1 \leq q \leq (\log P)^{C_0 R (d_1 + d_2 - 1)}} \bigcup_{\substack{ 0 \leq \mathbf{a} \leq q  \\ \gcd(q, a_1, \ldots, a_R) = 1}} \mathfrak{M}_{\mathbf{a}, q}(C_0),
$$
where
$$
\mathfrak{M}_{\mathbf{a}, q} (C_0) = \left\{ \boldsymbol{\alpha} \in [0,1)^R : 2 | q \alpha_r - a_r  | < \frac{(\log P)^{C_0 R (d_1 + d_2 - 1) }}{P} \ \ (1 \leq r \leq R)  \right\}.
$$
It can be verified that the arcs $\mathfrak{M}_{\mathbf{a}, q}(C_0)$'s are disjoint for $P$ sufficiently large.

We define
$$
\psi_{h}({t}) = \sum_{ \substack{ 0 \leq v \leq t  \\ v \equiv h (\text{mod } q) } } \Lambda^*(v).
$$
We use the notation $\mathbf{x} \equiv \mathbf{h}_1 \ (\text{mod } q)$ to mean $x_j \equiv h_{1,j} \ (\text{mod } q)$ for each $1 \leq j \leq n_1$,
and similarly for $\mathbf{y} \equiv \mathbf{h}_2 \ (\text{mod } q)$. We also denote $\mathbf{h} = (\mathbf{h}_1, \mathbf{h}_2)$.
Recall the definition of $S(\boldsymbol{\alpha})$ given in (\ref{def S}).
Let $\boldsymbol{\alpha} = \mathbf{a}/q + \boldsymbol{\beta} \in [0,1)^R$. In a similar manner as in \cite[(6.1)]{CM}, we can express $S(\boldsymbol{\alpha})$ as
\begin{eqnarray}
\sum_{\mathbf{h} \in (\mathbb{Z}/ q\mathbb{Z})^{n_1 + n_2} } \
e \left( \sum_{r=1}^{R} a_{r} g_{r} (\mathbf{h}_1 ;  \mathbf{h}_2) / q \right)
\int_{ (\mathbf{t}_1, \mathbf{t}_2) \in [0, P_1]^{n_1} \times [0, P_2]^{n_2} } e \left( \sum_{r=1}^{R} \beta_{r} g_{r} (\mathbf{t}_1 ;  \mathbf{t}_2)  \right)
\ \mathbf{d} \boldsymbol{\psi}_{\mathbf{h}}(\mathbf{t}),
\notag
\end{eqnarray}
where $\mathbf{d} \boldsymbol{\psi}_{\mathbf{h}}(\mathbf{t})$ denotes the product measure
$$
d{\psi}_{{h}_{1,1} }({t}_1) \times \ldots \times d{\psi}_{{h}_{1, n_1} }({t}_{1,n_1}) \times d{\psi}_{{h}_{2,1}}({t}_{2,1}) \times \ldots \times d{\psi}_{{h}_{2, n_2} }({t}_{2,n_2}).
$$

Let $\phi$ be Euler's totient function. For a positive integer $q$, let $\mathbb{U}_q$ be the group of units in $\mathbb{Z}/q \mathbb{Z}$.
Let $\mathfrak{B}_0 = [0,1]^{n_1 + n_2}$ and
$$
\mathcal{I}( \mathfrak{B}_0,  \boldsymbol{\tau}) = \int_{(\mathbf{v}_1, \mathbf{v}_2 )\in \mathfrak{B}_0 } e \left( \sum_{r=1}^{R} \tau_{r}  \cdot G_{r}(\mathbf{v}_1 ; \mathbf{v}_2)  \right) \ \mathbf{d}\mathbf{v}.
$$
We denote $P_1^{d_1} P_2^{d_2} \boldsymbol{\beta} = (P_1^{d_1} P_2^{d_2} \beta_1, \ldots, P_1^{d_1} P_2^{d_2} \beta_R)$.
With these notations we have the following lemma.
\begin{lem}
\label{Lemma 6 in CM}
Let $c'>0$, $q \leq (\log P)^{C_0}$, and  $\mathbf{h} \in (\mathbb{Z} / q \mathbb{Z})^{n_1 + n_2}$. Suppose $\boldsymbol{\alpha} = \mathbf{a}/q + \boldsymbol{\beta} \in \mathfrak{M}_{\mathbf{a},q}(C_0)$.
Then we have
\begin{eqnarray}
&&\int_{(\mathbf{t}_1, \mathbf{t}_2) \in  [0, P_1]^{n_1} \times [0, P_2]^{n_2} } e \left( \sum_{r=1}^{R} \beta_{r} g_{r}(\mathbf{t}_1 ; \mathbf{t}_2)  \right)
\ \mathbf{d} \boldsymbol{\psi}_{\mathbf{h}}(\mathbf{t})
\notag
\\
&=& \mathbbm{1}_{ \mathbb{U}_q^{n_1 + n_2} } (\mathbf{h})  \  \frac{P_1^{n_1} P_2^{n_2}}{\phi(q)^{n_1 + n_2} } \ \mathcal{I}(\mathfrak{B}_0,  P_1^{d_1} P_2^{d_2} \boldsymbol{\beta}) + O( P_1^{n_1} P_2^{n_2} / (\log P)^{c'} ).
\notag
\end{eqnarray}
\end{lem}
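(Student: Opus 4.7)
The plan is to reduce the prime-weighted Stieltjes integral against $\mathbf{d}\boldsymbol{\psi}_{\mathbf{h}}(\mathbf{t})$ to an ordinary Lebesgue integral by iterated Abel summation based on the Siegel--Walfisz theorem, and then rescale to the box $\mathfrak{B}_0 = [0,1]^{n_1+n_2}$ and replace $g_r$ by its leading bihomogeneous part $G_r$ inside the phase. Since $\mathbf{d}\boldsymbol{\psi}_{\mathbf{h}}$ has product structure, everything reduces to one-variable estimates done $n_1+n_2$ times.

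First I would address the indicator factor $\mathbbm{1}_{\mathbb{U}_q^{n_1+n_2}}(\mathbf{h})$. If some coordinate $h_{i,j}$ satisfies $\gcd(h_{i,j},q)>1$, then the only $v\equiv h_{i,j}\pmod q$ with $\Lambda^*(v)\neq 0$ is $v=h_{i,j}$ itself (when it is a prime), so $\psi_{h_{i,j}}(P_i)\ll \log P_i$; estimating the remaining $n_1+n_2-1$ marginals trivially shows this contribution is $\ll P_1^{n_1-1}P_2^{n_2}(\log P)^{O(1)}$, which is absorbed into the error term. For $\mathbf{h}\in \mathbb{U}_q^{n_1+n_2}$ the Siegel--Walfisz theorem gives, for any $A>0$ and uniformly in $q\leq (\log P)^{C_0}$, the asymptotic $\psi_h(t)=t/\phi(q)+O(t(\log t)^{-A})$. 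Iterating integration by parts in each of the $n_1+n_2$ variables against the smooth phase $f(\mathbf{t})=e\bigl(\sum_r \beta_r g_r(\mathbf{t})\bigr)$ replaces each $d\psi_{h_{i,j}}$ by $dt_{i,j}/\phi(q)$ and generates remainders of the form $\int (\partial_{t_{i,j}} f)\cdot(\psi_{h_{i,j}}(t_{i,j})-t_{i,j}/\phi(q))\,dt_{i,j}$. Since $|\partial_{t_{i,j}} f|\ll \max_r|\beta_r|\cdot P_1^{d_1}P_2^{d_2}/P_i \ll (\log P)^{C_0 R(d_1+d_2-1)}/P_i$ on $\mathfrak{M}_{\mathbf{a},q}(C_0)$, choosing $A$ sufficiently large (depending on $c'$, $C_0$, $R$, $d_1$, $d_2$, $n_1$, $n_2$) makes the cumulative remainder $O(P_1^{n_1}P_2^{n_2}/(\log P)^{c'})$.

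After this reduction, I would rescale $\mathbf{t}_i=P_i\mathbf{v}_i$ so the Lebesgue integral becomes $P_1^{n_1}P_2^{n_2}\int_{\mathfrak{B}_0} e\bigl(\sum_r \beta_r g_r(P_1\mathbf{v}_1;P_2\mathbf{v}_2)\bigr)d\mathbf{v}$. Writing $g_r=G_r+(g_r-G_r)$ and using $e(x+y)=e(x)+O(|y|)$, each monomial of $g_r-G_r$ of bidegree $(a,b)$ with $a+b\leq d_1+d_2-1$ contributes a phase of size $|\beta_r|\cdot P_1^aP_2^b\ll (\log P)^{O(1)}P_1^{a-d_1}P_2^{b-d_2}$, which is $o((\log P)^{-c'})$ after the choice of $A$ above, yielding the claimed main term $P_1^{n_1}P_2^{n_2}\phi(q)^{-(n_1+n_2)}\mathcal{I}(\mathfrak{B}_0, P_1^{d_1}P_2^{d_2}\boldsymbol{\beta})$.

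The main obstacle is the bookkeeping of the Siegel--Walfisz error after $n_1+n_2$ iterations of integration by parts combined with the bound $|\beta_r|\leq (\log P)^{C_0 R(d_1+d_2-1)}/(qP)$ available on the major arcs, and in particular controlling the monomial-by-monomial replacement of $g_r$ by its bihomogeneous leading part uniformly in the parameter $\mathfrak{b}$. Both amount to choosing the exponent $A$ in Siegel--Walfisz sufficiently large relative to the target loss $c'$ and to the combinatorial data $C_0, R, d_1, d_2, n_1, n_2$; everything else is book-keeping inherited from the standard major-arcs treatment in \cite[Sections~6 and~7]{CM} adapted to the bihomogeneous setting.
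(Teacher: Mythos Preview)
Your outline follows the same path the paper intends: it omits the proof and refers to \cite[Lemma~6]{CM}, which is precisely the Siegel--Walfisz plus iterated Abel summation reduction you describe, adapted to the two box scales $P_1,P_2$. The treatment of the case $\mathbf{h}\notin\mathbb{U}_q^{n_1+n_2}$ and the replacement of each $d\psi_{h_{i,j}}$ by $dt_{i,j}/\phi(q)$ via partial summation together with the derivative bound $|\partial_{t_{i,j}}f|\ll (\log P)^{O(1)}/P_i$ on $\mathfrak{M}_{\mathbf{a},q}(C_0)$ are correct.

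There is, however, a gap in your final step. You claim that a lower-order monomial of bidegree $(a,b)$ with $a+b\le d_1+d_2-1$ contributes a phase of size $|\beta_r|P_1^aP_2^b\ll (\log P)^{O(1)}P_1^{a-d_1}P_2^{b-d_2}=o((\log P)^{-c'})$. But nothing in the hypotheses on $g_r$ forces $a\le d_1$: the assumption is only that the \emph{top} homogeneous part is bihomogeneous of bidegree $(d_1,d_2)$, and lower-order terms may have arbitrary bidegree. Once $a>d_1$ one has $P_1^{a-d_1}P_2^{b-d_2}=P_2^{\mathfrak{b}(a-d_1)+(b-d_2)}$, and since $a+b\le d_1+d_2-1$ only guarantees $d_2-b\ge (a-d_1)+1$, this exponent is positive whenever $(\mathfrak{b}-1)(a-d_1)\ge 1$; for instance with $d_1=d_2=2$, a monomial $x_1^3$, and any $\mathfrak{b}>2$ the quantity grows like a power of $P_2$. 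This error has nothing to do with the Siegel--Walfisz exponent $A$ and cannot be repaired by enlarging it, so your last paragraph conflates two distinct sources of error. In the paper's actual application one has $g(\mathbf{x};\mathbf{y})=f(x_1y_1,\dots,x_ny_n)$, so every monomial automatically satisfies $a=b<d$ and the difficulty disappears; but for the lemma in the stated generality you would need either an additional bidegree restriction on $g_r-G_r$, or an argument for those monomials that does not rely on the crude bound $|e(x+y)-e(x)|\ll|y|$.
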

We omit the proof of Lemma \ref{Lemma 6 in CM} because it can be established by following the argument of \cite[Lemma 6]{CM} in our setting and the changes required are minimal.

Let us define
$$
J(L) = \int_{ 
\boldsymbol{\tau}  \in [-L,L]^R
} \mathcal{I}( \mathfrak{B}_0,  \boldsymbol{\tau}) \ \mathbf{d} \boldsymbol{\tau}.
$$
It then follows by \cite[Lemma 5.6]{DS} that under our assumptions on $\mathbf{G}$, namely (\ref{assmpn on F}), we have
\begin{equation}
\label{defn of sing int}
\mu(\infty) = \int_{\boldsymbol{\tau} \in \mathbb{R}^R} \mathcal{I}( \mathfrak{B}_0,  \boldsymbol{\tau}) \ \mathbf{d} \boldsymbol{\tau},
\end{equation}
which is called the \textit{singular integral}, exists, and that
\begin{equation}
\label{(3.9') is S}
\Big{|} \mu(\infty) - J(L) \Big{|} \ll L^{- 1}.
\end{equation}
We note that $\mu(\infty)$ is the same as what is defined in \cite[(5.3)]{DS}, and we have
\begin{equation}
\label{mu infty positive}
\mu(\infty) > 0
\end{equation}
provided the system of equations
$G_{r}(\mathbf{x}; \mathbf{y}) = 0$ $(1 \leq r \leq R)$
has a non-singular real solution in $(0,1)^{n_1 + n_2}$.

Let us define the following sums:
\begin{equation}
\label{defn Stilde}
\mathcal{S}_{ \mathbf{a}, q } = \sum_{\mathbf{k} \in \mathbb{U}_q^{n_1 + n_2}  } e \left(
\sum_{r=1}^{R}  g_{r} (\mathbf{k}_1; \mathbf{k}_2)  \cdot a_{r} /q \right),
\end{equation}
\begin{equation}
\label{def sigular series partial}
A( q ) = \sum_{ \substack{ 0 \leq \mathbf{a} < q \\   \gcd (q, \mathbf{a}) = 1 } } \frac{1}{\phi(q)^{n_1 + n_2} } \ \mathcal{S}_{ \mathbf{a}, q },
\ \ \text{  and  } \ \
\mathfrak{S}(P) = \sum_{q \leq (\log P)^{C_0 R (d_1 + d_2 -1) } }  A( q ).
\end{equation}
Then by combining Lemma \ref{Lemma 6 in CM}, (\ref{(3.9') is S}), and the definition of major arcs, we obtain the following.
\begin{lem}
\label{lemma major arc estimate}
Given any $c>0$,  
we have
\begin{eqnarray}
\label{major arc lem}
\int_{\mathfrak{M}(\vartheta_M) } S(\boldsymbol{\alpha} ) \ \mathbf{d} \boldsymbol{\alpha}
&=&
\mathfrak{S}(P) \mu(\infty) P_1^{n_1 - d_1 R} P_2^{n_2 - d_2 R}
\\
\notag
&+&  O\left( \frac{ P_1^{n_1 - d_1 R} P_2^{n_2 - d_2 R} }{(\log P)^{C_0 R (d_1 + d_2 - 1)}} \sum_{q}  q |A(q)|  +  \frac{P_1^{n_1 - d_1 R} P_2^{n_2 - d_2 R} }{(\log P)^c} \right),
\end{eqnarray}
where the summation in the $O$-term is over $1 \leq q \leq (\log P)^{C_0 R (d_1 + d_2 -1) }$.
\end{lem}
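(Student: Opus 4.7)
The plan is to analyze each arc $\mathfrak{M}_{\mathbf{a},q}(C_0)$ separately and then sum the contributions. Since the arcs are disjoint, I would first write
\[
\int_{\mathfrak{M}(\vartheta_M)} S(\boldsymbol{\alpha})\,\mathbf{d}\boldsymbol{\alpha}
 = \sum_{q \leq (\log P)^{C_0 R(d_1+d_2-1)}}
   \sum_{\substack{0 \leq \mathbf{a} \leq q \\ \gcd(q,\mathbf{a})=1}}
   \int_{\mathfrak{M}_{\mathbf{a},q}(C_0)} S(\mathbf{a}/q + \boldsymbol{\beta})\,\mathbf{d}\boldsymbol{\beta},
\]
and substitute the identity displayed before Lemma \ref{Lemma 6 in CM} to rewrite $S(\mathbf{a}/q+\boldsymbol{\beta})$ as a sum over residues $\mathbf{h}\in(\mathbb{Z}/q\mathbb{Z})^{n_1+n_2}$ of the exponential factor $e(\sum_r a_r g_r(\mathbf{h})/q)$ times the inner prime-counting integral against $\mathbf{d}\boldsymbol{\psi}_{\mathbf{h}}(\mathbf{t})$.

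Next I would apply Lemma \ref{Lemma 6 in CM} to the inner integral; summing over $\mathbf{h}$ the indicator $\mathbbm{1}_{\mathbb{U}_q^{n_1+n_2}}(\mathbf{h})$ collapses the residue sum into the complete exponential sum $\mathcal{S}_{\mathbf{a},q}$, and the error picks up at most a factor $q^{n_1+n_2}$. This yields, uniformly for $\boldsymbol{\beta}$ in the box defining $\mathfrak{M}_{\mathbf{a},q}(C_0)$,
\[
S(\mathbf{a}/q+\boldsymbol{\beta}) = \frac{\mathcal{S}_{\mathbf{a},q}\,P_1^{n_1}P_2^{n_2}}{\phi(q)^{n_1+n_2}}\,\mathcal{I}(\mathfrak{B}_0, P_1^{d_1}P_2^{d_2}\boldsymbol{\beta}) + O\bigl(q^{n_1+n_2} P_1^{n_1} P_2^{n_2}/(\log P)^{c'}\bigr).
\]
Integrating the error against the volume of $\mathfrak{M}_{\mathbf{a},q}(C_0)$ (which is $\ll (\log P)^{C_0 R^2(d_1+d_2-1)}/(qP)^R$) and summing over all $\leq q^R$ values of $\mathbf{a}$ and over $q \leq (\log P)^{C_0 R(d_1+d_2-1)}$ produces a total error of $O(P_1^{n_1-d_1 R}P_2^{n_2-d_2 R}/(\log P)^c)$ provided $c'$ is chosen sufficiently large in terms of $c$, $C_0$, $n_1$, $n_2$, $R$, and $d_1+d_2$.

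Then I would make the change of variables $\boldsymbol{\tau} = P_1^{d_1}P_2^{d_2}\boldsymbol{\beta}$, which has Jacobian $P^{-R}$ and transforms the $\boldsymbol{\beta}$-box into $[-L,L]^R$ with $L = (\log P)^{C_0 R(d_1+d_2-1)}/(2q)$. The main term contribution becomes
\[
\frac{\mathcal{S}_{\mathbf{a},q}\,P_1^{n_1-d_1R}P_2^{n_2-d_2R}}{\phi(q)^{n_1+n_2}}\,J(L).
\]
Applying (\ref{(3.9') is S}) to replace $J(L)$ by $\mu(\infty)$ incurs an error of order $L^{-1} \ll q/(\log P)^{C_0 R(d_1+d_2-1)}$. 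Summing the resulting main terms over $\mathbf{a}$ and $q$ gives precisely $\mathfrak{S}(P)\,\mu(\infty)\,P_1^{n_1-d_1 R}P_2^{n_2-d_2 R}$ by the definition (\ref{def sigular series partial}), while the $L^{-1}$ error summed in $\mathbf{a}$ and bounded by $|A(q)|$ produces exactly the first term of the advertised $O$-expression.

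The main technical nuisance will be the bookkeeping in the error from Lemma \ref{Lemma 6 in CM}: one must verify that the $q^{n_1+n_2}$ loss, combined with the number of reduced fractions $\mathbf{a}/q$ and the volume of each major arc, can be absorbed by the saving $(\log P)^{-c'}$ after trivially bounding all polylogarithmic factors by a single power of $\log P$ depending on $C_0$. Once $c'$ is taken large enough, every such contribution merges into the final $(\log P)^{-c}$ term, and the argument goes through routinely.
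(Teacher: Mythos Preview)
Your proposal is correct and follows exactly the route the paper indicates: the paper simply says the lemma follows ``by combining Lemma \ref{Lemma 6 in CM}, (\ref{(3.9') is S}), and the definition of major arcs,'' and your write-up fills in precisely those steps --- splitting into the disjoint arcs $\mathfrak{M}_{\mathbf{a},q}(C_0)$, applying Lemma \ref{Lemma 6 in CM} to extract $\mathcal{S}_{\mathbf{a},q}\,\mathcal{I}(\mathfrak{B}_0,P\boldsymbol{\beta})/\phi(q)^{n_1+n_2}$, changing variables to $J(L_q)$, summing over $\mathbf{a}$ first (since $L_q$ depends only on $q$) to obtain $A(q)J(L_q)$, and then invoking (\ref{(3.9') is S}) to produce $\mu(\infty)$ plus the $q|A(q)|/(\log P)^{C_0R(d_1+d_2-1)}$ error. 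The bookkeeping point you flag at the end is real but harmless, since $c'$ in Lemma \ref{Lemma 6 in CM} is arbitrary.
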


We still have to deal with the term $\mathfrak{S}(P)$, and this is done in the following section.
\subsection{Singular Series}
\label{section singular series}
We now bound $\mathcal{S}_{ \boldsymbol{a}, q }$ when $q$ is a prime power. In order to simplify the exposition let us define
$$
\mathcal{B} := \min\{  \textnormal{codim } V_{\mathbf{G}, 1}^*, \textnormal{codim } V_{\mathbf{G}, 2}^*  \}
\ \ \text{  and  } \ \
Q := \frac{1}{2} \cdot \frac{\mathcal{B}}{2^{d_1 + d_2 -2}(R+1) (d_1 + d_2)  }.
$$
Since  $d_1 + d_2 \geq 3$ and (\ref{assmpn on F}) implies $Q > 4R(d_1 + d_2 - 1)/(d_1 + d_2)$,
we can verify that
\begin{equation}
\label{ineq on Q}
Q > \frac{1 + R(2d_1 + 2d_2 + 1)}{2d_1 + 2d_2}
\ \
\text{  and  }
\ \
Q > \frac{R+1}{1 - \frac{1}{2d_1 + 2d_2}} > R+1.
\end{equation}
\begin{lem}
\label{to bound local factor}
Let $p$ be a prime and let $q = p^t$, $t \in \mathbb{N}.$ Let $0 \leq \mathbf{a} < q$ with $\gcd(q, \mathbf{a})=1$.
Let $\varepsilon > 0$ be sufficiently small.
Then 
we have the following bounds
\begin{eqnarray}
\notag
\mathcal{S}_{ \boldsymbol{a}, q } \ll
\left\{
    \begin{array}{ll}
         p^{-\varepsilon} q^{n_1 + n_2 - Q}
         &\mbox{if } t \leq 2(d_1 + d_2),\\
         p^{Q - \varepsilon} q^{n_1 + n_2 -Q}
         &\mbox{if } t > 2(d_1 + d_2),
    \end{array}
\right.
\end{eqnarray}
where the implicit constants are independent of $p$ and $t$.
\end{lem}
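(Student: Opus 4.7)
The plan is to deduce Lemma~\ref{to bound local factor} from Corollary~\ref{cor 1 of Lemma 4.3 in DS} by reducing the units-restricted exponential sum $\mathcal{S}_{\mathbf{a}, q}$ to a linear combination of complete exponential sums via inclusion--exclusion on the divisibility-by-$p$ condition. Since $q = p^t$, the unit indicator factors as $\mathbbm{1}_{\mathbf{k} \in \mathbb{U}_q^{n_1+n_2}} = \prod_{i=1}^{n_1+n_2}(1 - \mathbbm{1}_{p \mid k_i})$, and expanding the product yields
$$
\mathcal{S}_{\mathbf{a}, q} \;=\; \sum_{S \subseteq [n_1+n_2]} (-1)^{|S|}\, \mathcal{S}^{(S)}_{\mathbf{a}, q},
$$
where $\mathcal{S}^{(S)}_{\mathbf{a}, q}$ has the unit condition replaced by $k_i \equiv 0 \pmod p$ for $i \in S$ and by no condition for $i \notin S$.

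For each $S$, I split $S = S_1 \sqcup S_2$ according to $\mathbf{x}$- and $\mathbf{y}$-coordinates, substitute $k_i = p k'_i$ for $i \in S$, and lift each $k'_i$ to range over $\mathbb{Z}/q\mathbb{Z}$ (correcting by $p^{-|S|}$ for the overcounting) to obtain
$$
\mathcal{S}^{(S)}_{\mathbf{a}, q} \;=\; \frac{1}{p^{|S|}} \sum_{\mathbf{k}' \in (\mathbb{Z}/q\mathbb{Z})^{n_1+n_2}} e\!\left( \sum_{r=1}^{R} a_r\, g^{(S)}_r(\mathbf{k}') / q \right),
$$
with $g^{(S)}_r(\mathbf{k}') = g_r(p\mathbf{k}'|_{S_1}, \mathbf{k}'|_{S_1^c};\, p\mathbf{k}'|_{S_2}, \mathbf{k}'|_{S_2^c})$. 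Each monomial of $g_r^{(S)}$ acquires an extra factor $p^{\alpha_{S_1} + \beta_{S_2}}$, so after factoring out a common $p^{\nu}$ (chosen minimally across $r$ so that some coefficient remains coprime to $p$) the character lives modulo $p^{t-\nu}$. The leading bihomogeneous form of the rescaled polynomial agrees, up to lower-order terms, with $G_r$ restricted to $\mathbf{x}_{S_1} = \mathbf{0}$ and $\mathbf{y}_{S_2} = \mathbf{0}$, whose singular-locus codimension is at least $\mathcal{B} - |S|(R+1)$ by Lemma~\ref{Lemma on the B rank}.

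For each $S$ with $|S|(R+1) < \mathcal{B}$, Corollary~\ref{cor 1 of Lemma 4.3 in DS} applies to the inner complete sum and yields
$$
\mathcal{S}^{(S)}_{\mathbf{a}, q} \;\ll\; p^{-|S|}\, q^{n_1+n_2}\, (p^{t-\nu})^{-\widetilde{K}_S/(R(d_1+d_2-1)) + \varepsilon},
$$
with $\widetilde{K}_S \ge (\mathcal{B} - |S|(R+1))/2^{d_1+d_2-2}$, while the remaining subsums are absorbed via the trivial bound $|\mathcal{S}^{(S)}_{\mathbf{a}, q}| \le q^{n_1+n_2}/p^{|S|}$. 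Summing over $S$ and using the strict inequalities (\ref{ineq on Q}) for $Q$ should yield the exponent $n_1 + n_2 - Q$ on $q$. The regime split arises in this last step: when $t \leq 2(d_1+d_2)$, one has $\nu \leq d_1 + d_2 \leq t/2$, so the effective modulus remains at least $p^{t/2}$ and the gap between $\widetilde{K}_S/(R(d_1+d_2-1))$ and $Q$ produces the desired $p^{-\varepsilon}$ saving; for $t > 2(d_1+d_2)$, subsums where $\nu$ approaches $t$ contribute at most $p^Q q^{n_1+n_2-Q}$, giving the weaker $p^{Q-\varepsilon}$ bound.

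The main obstacle is the careful bookkeeping of the $p$-valuations $\nu_r$ across $1 \le r \le R$: different $g_r$ may introduce different powers of $p$ under the substitution, so matching them to a common effective modulus while preserving the coprimality hypothesis of Corollary~\ref{cor 1 of Lemma 4.3 in DS} and the integer-coefficient/codimension hypotheses of Lemma~\ref{Lemma 4.3 in DS} requires a small but delicate rearrangement, to be handled monomial-by-monomial.
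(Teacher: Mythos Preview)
Your inclusion--exclusion opening matches the paper's treatment of the case $t \le 2(d_1+d_2)$, but the execution breaks at the point where you identify the leading form. After substituting $k_i = p k'_i$ for $i\in S$ and lifting all of $\mathbf{k}'$ to $(\mathbb{Z}/q\mathbb{Z})^{n_1+n_2}$, the degree-$(d_1+d_2)$ homogeneous part of $g_r^{(S)}(\mathbf{k}')$ is $G_r(p\mathbf{k}'|_{S_1},\mathbf{k}'|_{S_1^c};\,p\mathbf{k}'|_{S_2},\mathbf{k}'|_{S_2^c})$, \emph{not} the restriction $G_r|_{\mathbf{x}_{S_1}=\mathbf{0},\,\mathbf{y}_{S_2}=\mathbf{0}}$: extra factors of $p$ do not lower the polynomial degree, so those monomials remain in the top piece. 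Two things follow. First, Lemma~\ref{Lemma on the B rank} is now irrelevant to the form you actually produced. Second, and this is the real gap, the implicit constant in Corollary~\ref{cor 1 of Lemma 4.3 in DS} (via Lemma~\ref{Lemma 4.3 in DS}) is independent only of the \emph{lower}-order coefficients; it depends on the leading form, which here depends on $p$. Hence the bound you obtain is not uniform in $p$, contrary to what the lemma asserts. Your $p$-valuation parameter $\nu$ does not rescue this: generically $\nu=0$ (any monomial of $g_r$ avoiding the $S$-variables, or any $p$-coprime lower-order coefficient, forces it), so no modulus reduction occurs.

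The paper avoids this by \emph{not} lifting the frozen variables. For $t\le 2(d_1+d_2)$ it keeps the outer sum over $\mathbf{v}\in(\mathbb{Z}/p^{t-1}\mathbb{Z})^{|S|}$ and, for each fixed $\mathbf{v}$, applies Corollary~\ref{cor 1 of Lemma 4.3 in DS} only to the complete sum in the remaining $n_1+n_2-|S|$ variables; in those variables the leading form is genuinely the restriction $\mathfrak{F}_r = G_r|_{\mathbf{x}_{S_1}=\mathbf{0},\,\mathbf{y}_{S_2}=\mathbf{0}}$, with $p$ and $\mathbf{v}$ appearing only in lower-order terms, which is precisely where Lemma~\ref{Lemma on the B rank} enters and why the constant is uniform. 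The actual dichotomy is then on the size of $|S|$ relative to $\mathcal{B}/\bigl(2^{d_1+d_2-2}(R+1)\bigr)$ (trivial bound versus Corollary~\ref{cor 1 of Lemma 4.3 in DS}), not on a $p$-valuation. For $t>2(d_1+d_2)$ the paper uses a completely different decomposition: write each unit as $\mathbf{k}_0+p\mathbf{b}$ with $\mathbf{k}_0\in\mathbb{U}_p^{n_1+n_2}$, observe that the leading form in $\mathbf{b}$ is exactly $G_r$ (again $p$-independent), and apply Lemma~\ref{Lemma 4.3 in DS} directly to the $\mathbf{b}$-sum with $P_1=P_2=p^{t-1}$ and a suitable $\vartheta$; the factor $p^{Q}$ comes from the trivial outer sum over $\mathbf{k}_0$ together with the drop from $q$ to $p^{t-1}$.
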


\begin{proof}
We consider the two cases $t \leq 2(d_1 + d_2)$ and $t > 2 ( d_1 + d_2)$ separately.
We begin with the case $t \leq 2(d_1 + d_2)$. In this case
we apply the inclusion-exclusion principle (see \cite[(7.3)]{CM}) 
and express $\mathcal{S}_{ \mathbf{a}, q }$ as
\begin{eqnarray}
\label{bound on S tilde}
\sum_{ \substack{I_1 \subseteq \{ 1, 2, \ldots, n_1 \}  \\ I_2 \subseteq \{ 1, 2, \ldots, n_2 \} } } (-1)^{|I_1| + |I_2|}
\sum_{ \mathbf{v} \in  (\mathbb{Z} / p^{t-1} \mathbb{Z} )^{|I_1| + |I_2|} } \
\sum_{\mathbf{k} \in (\mathbb{Z} / q \mathbb{Z})^{n_1 + n_2} }
\mathfrak{H}_{I_1, I_2} (\mathbf{k}; \mathbf{v}) \ e \left( \sum_{r=1}^{R}  g_{r} (\mathbf{k}_1; \mathbf{k}_2)  \cdot a_{r} /q  \right),
\end{eqnarray}
where
$\mathfrak{H}_{I_1, I_2} (\mathbf{k}; \mathbf{v})$ is the characteristic function of the set $\{ \mathbf{k} \in (\mathbb{Z} / q \mathbb{Z})^{n_1 + n_2} : k_{i,j} = p v_{i,j} \ (j \in I_i, i= 1,2)\}$.
Here we are using the notations $\mathbf{k} = (\mathbf{k}_1, \mathbf{k}_2)$ where $\mathbf{k}_i \in (\mathbb{Z} / q \mathbb{Z})^{n_i}$, and
$\mathbf{v} = (\mathbf{v}_1, \mathbf{v}_2)$ where $\mathbf{v}_i = (v_{i,j_1}, \ldots, v_{i, j_{|I_i|}})\in (\mathbb{Z} / p^{t-1} \mathbb{Z})^{|I_i|}$
and $I_i = \{ j_1, \ldots, j_{|I_i|} \}$.
We now bound the summand in the expression ~(\ref{bound on S tilde}) by further considering two cases,
$|I_1| + |I_2| > \frac{ \mathcal{B}}{ 2^{d_1 + d_2 -2 } (R+1) }$ and $|I_1| + |I_2| \leq \frac{ \mathcal{B}}{ 2^{d_1 + d_2 -2 } (R+1) }$.
In the first case $|I_1| + |I_2| > \frac{ \mathcal{B}}{ 2^{d_1 + d_2 -2 } (R+1) }$, we use the following trivial estimate
\begin{eqnarray}
&&\Big{|} \sum_{ \mathbf{v} \in  (\mathbb{Z} / p^{t-1} \mathbb{Z})^{|I_1| + |I_2|} } \
\sum_{\mathbf{k} \in (\mathbb{Z} / q \mathbb{Z})^{n_1 + n_2} }
\mathfrak{H}_{I_1, I_2}(\mathbf{k}; \mathbf{v}) \ e \left( \sum_{r=1}^{R}  g_{r}(\mathbf{k}_1; \mathbf{k}_2)  \cdot a_{r} /q  \right) \Big{|}
\label{est 1}
\\
&\leq&
p^{(t-1) (|I_1| + |I_2|) } (p^t)^{n_1 + n_2  - |I_1| - |I_2|}
\notag
\\
&\leq&
\notag
q^{n_1 + n_2  - Q - \varepsilon}.
\end{eqnarray}

On the other hand, suppose  $|I_1| + |I_2| \leq \frac{ \mathcal{B}}{ 2^{d_1 + d_2 -2 } (R+1) }$. Let us label $\mathbf{s} = (s_{1}, \ldots , s_{n_1 - |I_1|})$ and $\mathbf{w}= (w_{1}, \ldots , w_{n_2 - |I_2|} )$ to be the remaining variables of $\mathbf{x}$ and $\mathbf{y}$ after setting $x_j = 0$ for each $j \in I_1$ and $y_{j'} = 0$ for each $j' \in I_2$ respectively. For each $1 \leq r \leq R$, 
let $\mathfrak{f}_{r}(\mathbf{s}; \mathbf{w})$ be the polynomial obtained by
substituting $x_j = p v_{1,j}$ $(j \in I_1)$ and $y_{j'} = p v_{2,j'}$ $(j' \in I_2)$ to the polynomial ${g}_{r}(\mathbf{x}; \mathbf{y})$.
Thus $\mathfrak{f}_{r} (\mathbf{s};\mathbf{w})$ is a polynomial in $\mathbf{s}$ and $\mathbf{w}$ whose coefficients may depend on $p$ and $\mathbf{v}$.
With these notations we have
\begin{eqnarray}
\label{ineq 3}
\sum_{\mathbf{k} \in (\mathbb{Z} / q \mathbb{Z})^{n_1 + n_2}} \mathfrak{H}_{I_1, I_2}(\mathbf{k}; \mathbf{v}) \ e \left( \sum_{r=1}^{R}  g_{r}(\mathbf{k}_1; \mathbf{k}_2)  \cdot a_{r} /q  \right)
&=&
\sum_{ \substack{ \mathbf{s} \in [0, q-1]^{n_1 - |I_1|} \\  \mathbf{w} \in [0, q-1]^{n_2 - |I_2|}} } e \left(  \sum_{r=1}^{R}  \mathfrak{f}_{r} (\mathbf{s}; \mathbf{w})  \cdot a_{r} /q  \right).
\end{eqnarray}
We can also deduce easily that the homogeneous degree $(d_1 + d_2)$ portion of the polynomial $\mathfrak{f}_{r}(\mathbf{s}; \mathbf{w})$, which we denote
$\mathfrak{F}_{r}(\mathbf{s};\mathbf{w})$, is obtained by substituting $x_j = 0 \ (j \in I_1)$ and $y_{j'} = 0 \ (j' \in I_2)$ to ${G}_{r}(\mathbf{x};\mathbf{y})$.
In particular, it is independent of $p$ and $\mathbf{v}$.
It then follows from Lemma \ref{Lemma on the B rank} that
\begin{eqnarray}
\min \{ \textnormal{codim}(  V^*_{ \boldsymbol{\mathfrak{F}}, 1} ), \textnormal{codim}(  V^*_{ \boldsymbol{\mathfrak{F}}, 2} ) \}
&\geq& \mathcal{B} - (R+1)(|I_1| + |I_2|)
\notag
\geq
\left( 1 -  \frac{1}{ 2^{d_1 + d_2 -2 }} \right) \mathcal{B}.
\end{eqnarray}

Let $\varepsilon' > 0$ be sufficiently small. Thus by Corollary \ref{cor 1 of Lemma 4.3 in DS} we obtain
\begin{eqnarray}
\notag
\sum_{ \substack{ \mathbf{s} \in [0, q-1]^{n_1 - |I_1|} \\  \mathbf{w} \in [0, q-1]^{n_2 - |I_2|} } } e \left( \sum_{r=1}^{R} \mathfrak{f}_{r} (\mathbf{s};\mathbf{w})  \cdot a_{r} /q  \right)
&\ll& q^{n_1 + n_2 - |I_1| - |I_2| -
\frac{  \left( 1 -  2^{-d_1 - d_2 +2 } \right) \mathcal{B} - \delta_0  }
{2^{d_1 + d_2 -2}  R (d_1 + d_2 -1)  }
+ \varepsilon' }
\\
\notag
&\leq& q^{n_1 + n_2 - |I_1| - |I_2| - Q - \varepsilon}.
\end{eqnarray}
Consequently, we have from (\ref{ineq 3}) that
\begin{eqnarray}
&&\Big{|} \sum_{ \mathbf{v} \in  (\mathbb{Z} / p^{t-1} \mathbb{Z})^{|I_1| + |I_2|} } \
\sum_{\mathbf{k} \in (\mathbb{Z} / q \mathbb{Z})^{n_1 + n_2} }
\mathfrak{H}_{I_1, I_2}(\mathbf{k}; \mathbf{v}) \ e \left( \sum_{r=1}^{R}  g_{r}(\mathbf{k}_1; \mathbf{k}_2)  \cdot a_{r} /q  \right) \Big{|}
\label{est 2}
\\
&\leq&
p^{(t-1) (|I_1| + |I_2|) } q^{n_1 + n_2 - |I_1| - |I_2| - Q - \varepsilon}
\notag
\\
&\leq&
\notag
q^{n_1 + n_2  - Q - \varepsilon}
\end{eqnarray}
in this case as well.
By applying the estimates (\ref{est 1}) and (\ref{est 2}) in (\ref{bound on S tilde}), we obtain
the desired estimate for the case $t \leq 2(d_1 + d_2)$.

We now consider the case $t > 2(d_1 + d_2)$. By the definition of $\mathcal{S}_{ \mathbf{a}, q }$ we have
\begin{eqnarray}
\label{widetile S part 3-1}
\mathcal{S}_{ \mathbf{a}, q }
=
\sum_{\mathbf{k} \in \mathbb{U}_p^{n_1 + n_2}} \  \sum_{ \substack{  \mathbf{b}_1 \in [0,p^{t-1}-1]^{n_1}  \\  \mathbf{b}_2 \in [0,p^{t-1}-1]^{n_2}  }  } e \left( \sum_{r=1}^{R}  g_{r} (\mathbf{k}_1 + p \mathbf{b}_1 ; \mathbf{k}_2 + p \mathbf{b}_2 )  \cdot a_{r} /q  \right).
\end{eqnarray}
For each fixed $\mathbf{k} \in \mathbb{U}_p^{n_1 + n_2}$, we have
$$
g_{r} (\mathbf{k}_1 + p \mathbf{b}_1; \mathbf{k}_2 + p \mathbf{b}_2 )  = p^{d_1 + d_2}  G_{r} ( \mathbf{b}_1; \mathbf{b}_2)
+ \varpi_{r; p, \mathbf{k}}(\mathbf{b}) \ \ \ (1 \leq r \leq R),
$$
where $\varpi_{r; p, \mathbf{k}}(\mathbf{b})$ is a polynomial in $\mathbf{b} = (\mathbf{b}_1, \mathbf{b}_2)$ of degree at most $d_1 + d_2 - 1$. Clearly every monomial of  $\varpi_{r; p, \mathbf{k}}(\mathbf{b})$  has degree in $\mathbf{b}_i$ strictly less than $d_i$ for one of $i=1$ or $2$, and its coefficients are integers which may depend on $p$ and $\mathbf{k}$.
We let
$$
\mathfrak{c}_r(\mathbf{b}_1; \mathbf{b}_2) = {G}_{r} ( \mathbf{b}_1; \mathbf{b}_2) + \frac{1}{p^{d_1 + d_2}} \
\varpi_{r; p, \mathbf{k}} (\mathbf{b}) \ \ \ (1 \leq r \leq R).
$$
We can then express the inner sum on the right hand side of (\ref{widetile S part 3-1}) as
\begin{equation}
\label{**}
\sum_{ \mathbf{b} \in [0,p^{t-1}-1]^{n_1 + n_2}   } e \left( \sum_{r=1}^{R}  \mathfrak{c}_{r} (\mathbf{b}_1; \mathbf{b}_2 )  \cdot \frac{a_{r}}{q/p^{d_1 + d_2}}  \right).
\end{equation}
We have that each $\mathfrak{c}_r$ has coefficients in $\mathbb{Q}$, and its degree $(d_1 + d_2)$ homogeneous portion $G_r$ has coefficients in $\mathbb{Z}$. We apply Lemma \ref{Lemma 4.3 in DS} with
$\mathfrak{B}_1 = [0,1)^{n_1}$, $\mathfrak{B}_2 = [0,1)^{n_2}$, $\alpha_{r} = a_{r} /p^{t-d_1 - d_2}$ $(1 \leq r \leq R)$, $P_1 = P_2 = p^{t-1}$, and $P = p^{(t-1)(d_1 + d_2)}$. Let $\theta = \frac{1}{2(d_1 + d_2)(d_1 + d_2 - 1) (R+1)} < \frac{1}{d_1 + d_2}$. Suppose there exist $\widetilde{a}_1, \ldots, \widetilde{a}_R$ and
$1 \leq \widetilde{q} \leq P^{R(d_1 + d_2 - 1) \theta}$
such that $\gcd(\widetilde{q}, \widetilde{a}_1, \ldots, \widetilde{a}_R) = 1$ and
$$
2 |\widetilde{q} \alpha_r - \widetilde{a}_r| \leq P_1^{-d_1} P_2^{- d_2} P^{R (d_1 + d_2 - 1) \theta} \ \ (1 \leq r \leq R).
$$
Note from $t + 1 > 2(d_1 + d_2)$ it follows that $(t - d_1 - d_2) > \frac{t - 1}{2}$.
Then it is not possible that $p^{t-d_1 - d_2}$ divide $\widetilde{q}$, because
$$
1 \leq \widetilde{q} \leq P^{R(d_1 + d_2 - 1) \theta} < P^{\frac{1}{2(d_1 + d_2)}} = p^{ \frac{t-1}{2}} < p^{t - d_1 - d_2}.
$$
Since $\gcd(q, a_1, \ldots, a_R) = 1$ and $q = p^t$,  without loss of generality we assume $\gcd(a_1, p) = 1$.
Then $\widetilde{q} \alpha_1$ is not an integer. Thus we have
$$
\frac{1}{p^{t- d_1 - d_2}} \leq |\widetilde{q} \alpha_1 - \widetilde{a}_1| < \frac{1}{2} P_1^{-d_1} P_2^{- d_2} p^{\frac{t-1}{2}} \leq  \frac{1}{p^{(t-1)(d_1 + d_2 - 1/2) } }
$$
which is a contradiction, because $t- d_1 -d_2 < (t - 1) (d_1 + d_2 - 1/2)$.
Therefore, we are in the alternative (ii) of Lemma \ref{Lemma 4.3 in DS}, and the expression (\ref{**}) is bounded by
\begin{eqnarray}
\label{***}
\ll
P_1^{n_1} P_2^{n_2} P^{- \theta \cdot \frac{\mathcal{B} - \delta_0 }{2^{d_1 + d_2 - 2}}  } (\log P)^{n_1}
\ll (p^{t-1})^{n_1 + n_2 -  \frac{\mathcal{B} - \delta_0 }{ 2 (d_1 + d_2 - 1) (R+1) 2^{d_1 + d_2 - 2} } + \varepsilon' }
\leq (p^{t-1})^{n_1 + n_2 - Q - \varepsilon}.
\end{eqnarray}
Thus we can bound ~(\ref{widetile S part 3-1}) by (\ref{**}) and (\ref{***}) as follows
\begin{eqnarray}
| \mathcal{S}_{ \mathbf{a}, q } |
\ll
p^{n_1 + n_2} \ (p^{t-1})^{n_1 + n_2 - Q - \varepsilon}
\leq
p^{Q - \varepsilon} q^{n_1 + n_2  - Q }.
\notag
\end{eqnarray}

\end{proof}

By a similar argument as in \cite[Chapter VIII, \S 2, Lemma 8.1]{H}, one can show that $A(q)$
is a multiplicative function of $q$. We omit the proof of the following lemma as it is a basic exercise
involving the Chinese remainder theorem and manipulating summations.
\begin{lem}
Suppose $q,q' \in \mathbb{N}$ and $\gcd (q,q') = 1$. Then we have
$
A(q q') = A(q) A(q').
$
\end{lem}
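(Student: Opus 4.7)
The plan is to prove multiplicativity by a direct Chinese Remainder Theorem (CRT) computation, splitting both the $\mathbf{a}$-summation and the $\mathbf{k}$-summation along the coprime factorization $qq' = q \cdot q'$ and checking that the relevant coprimality conditions and exponential factors decompose correctly.

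First I would set up the standard partial-fractions parametrization of $\mathbf{a}/(qq')$. Given $0\leq\mathbf{a}<qq'$ with $\gcd(qq',\mathbf{a})=1$, let $\bar q'$ and $\bar q$ be multiplicative inverses of $q'$ mod $q$ and of $q$ mod $q'$. Define $b_r \equiv a_r\bar q'\pmod q$ and $c_r\equiv a_r\bar q\pmod{q'}$ with $0\leq b_r<q$ and $0\leq c_r<q'$. Then $a_r/(qq')\equiv b_r/q + c_r/q'\pmod 1$, and the map $\mathbf{a}\mapsto(\mathbf{b},\mathbf{c})$ is a bijection from $[0,qq')^R$ to $[0,q)^R\times[0,q')^R$. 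Since $\gcd(\bar q',q)=\gcd(\bar q,q')=1$, one has $\gcd(q,b_r)=\gcd(q,a_r)$ and $\gcd(q',c_r)=\gcd(q',a_r)$, hence
\[
\gcd(qq',\mathbf{a})=1\quad\Longleftrightarrow\quad \gcd(q,\mathbf{b})=1\ \text{and}\ \gcd(q',\mathbf{c})=1.
\]

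Next I would decompose the $\mathbf{k}$-sum. The CRT isomorphism $\mathbb{Z}/qq'\mathbb{Z}\cong\mathbb{Z}/q\mathbb{Z}\times\mathbb{Z}/q'\mathbb{Z}$ restricts to a bijection $\mathbb{U}_{qq'}^{n_1+n_2}\leftrightarrow \mathbb{U}_q^{n_1+n_2}\times\mathbb{U}_{q'}^{n_1+n_2}$, sending $\mathbf{k}$ to the pair $(\mathbf{k}',\mathbf{k}'')$ with $\mathbf{k}\equiv\mathbf{k}'\pmod q$ and $\mathbf{k}\equiv\mathbf{k}''\pmod{q'}$. Because each $g_r$ has integer coefficients, $g_r(\mathbf{k}_1;\mathbf{k}_2)\equiv g_r(\mathbf{k}'_1;\mathbf{k}'_2)\pmod q$ and likewise modulo $q'$. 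Combining these congruences with the partial-fractions identity above yields the factorization
\[
e\!\left(\sum_{r=1}^{R}a_r g_r(\mathbf{k}_1;\mathbf{k}_2)/(qq')\right)=e\!\left(\sum_{r=1}^{R}b_r g_r(\mathbf{k}'_1;\mathbf{k}'_2)/q\right)e\!\left(\sum_{r=1}^{R}c_r g_r(\mathbf{k}''_1;\mathbf{k}''_2)/q'\right),
\]
so summing over $\mathbf{k}\in\mathbb{U}_{qq'}^{n_1+n_2}$ gives $\mathcal{S}_{\mathbf{a},qq'}=\mathcal{S}_{\mathbf{b},q}\,\mathcal{S}_{\mathbf{c},q'}$.

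Finally I would assemble the pieces. Using $\phi(qq')=\phi(q)\phi(q')$ together with the bijection $\mathbf{a}\leftrightarrow(\mathbf{b},\mathbf{c})$ respecting coprimality, I obtain
\[
A(qq')=\frac{1}{\phi(qq')^{n_1+n_2}}\!\!\sum_{\substack{0\leq \mathbf{a}<qq'\\ \gcd(qq',\mathbf{a})=1}}\!\!\mathcal{S}_{\mathbf{a},qq'}=\frac{1}{\phi(q)^{n_1+n_2}\phi(q')^{n_1+n_2}}\!\!\sum_{\substack{\mathbf{b}\\ \gcd(q,\mathbf{b})=1}}\sum_{\substack{\mathbf{c}\\ \gcd(q',\mathbf{c})=1}}\!\!\mathcal{S}_{\mathbf{b},q}\mathcal{S}_{\mathbf{c},q'}=A(q)A(q').
\]
There is no serious obstacle here: the only point deserving care is the verification that the joint-gcd coprimality condition $\gcd(qq',\mathbf{a})=1$ splits cleanly under the partial-fractions bijection, and this is handled by the coprimality of $\bar q',\bar q$ to $q,q'$ respectively. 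The rest is bookkeeping with the CRT and the integrality of the coefficients of $g_r$.
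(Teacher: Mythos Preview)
Your proof is correct and is exactly the ``basic exercise involving the Chinese remainder theorem and manipulating summations'' that the paper alludes to (citing Hua, \textit{Additive theory of prime numbers}, Lemma~8.1) in lieu of a written argument. The paper omits the details entirely, so your write-up in fact supplies more than the paper does while following the same intended route.
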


Recall we defined the term $\mathfrak{S}(P)$ in (\ref{def sigular series partial}).
For each prime $p$, we define
\begin{equation}
\label{def mu p}
\mu(p) =  1  + \sum_{t=1}^{\infty} A(p^t),
\end{equation}
which converges absolutely under our assumptions on $\mathbf{g}$.
Furthermore, 
the following limit exists
\begin{equation}
\label{def sigular series}
\mathfrak{S}(\infty) := \lim_{L \rightarrow \infty} \ \sum_{q \leq L}  A(q)  = \prod_{p \ \text{prime}} \mu(p),
\end{equation}
which is called the \textit{singular series}.
We prove these statements in the following Lemma \ref{singular series lemma}.

\begin{lem}
\label{singular series lemma}
There exists $\delta_1 > 0$ such that for each prime $p$, we have
$
\mu(p) = 1 + O(p^{-1 - \delta_1})
$
where the implicit constant is independent of $p$.
Furthermore, we have
$$
\Big{|} \mathfrak{S}(P) -  \mathfrak{S}(\infty) \Big{|} \ll (\log P)^{-C_0 R (d_1 + d_2 - 1) \delta_2 }
$$
for some $\delta_2 > 0$.
\end{lem}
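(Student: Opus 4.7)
The plan is to proceed in two stages. First, I would extract a pointwise bound on $|A(p^t)|$ that is uniform in $p$ and $t$ by feeding Lemma \ref{to bound local factor} into the definition (\ref{def sigular series partial}); second, I would combine the multiplicativity of $A$ recorded in the preceding lemma with a weighted-tail trick to control both $\mu(p)$ and $\mathfrak{S}(P)-\mathfrak{S}(\infty)$ by the same computation.

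For the first stage, I would use the trivial counts: the number of admissible $\mathbf{a}$ modulo $p^t$ is at most $p^{tR}$, and $\phi(p^t)^{n_1+n_2} \gg p^{t(n_1+n_2)}$ with implied constant independent of $p$. Inserting these together with Lemma \ref{to bound local factor} yields
$$
|A(p^t)| \ll \begin{cases} p^{-\varepsilon - t(Q-R)} & \text{if } 1 \leq t \leq 2(d_1+d_2), \\ p^{Q - \varepsilon - t(Q-R)} & \text{if } t > 2(d_1+d_2). \end{cases}
$$
The second half of (\ref{ineq on Q}), namely $Q > R+1$, says the dominant term $t=1$ in the low range is already $O(p^{-1-\delta})$ for some $\delta>0$, and the remaining terms in the low range are smaller by an even wider geometric margin. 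For the high range I would sum the geometric series with ratio $p^{-(Q-R)} < p^{-1}$, getting $O(p^{Q - (2(d_1+d_2)+1)(Q-R)})$ from the first term; the first bound in (\ref{ineq on Q}) is precisely designed to force this exponent to be strictly less than $-1$. Taking $\delta_1>0$ to be the smaller of the two gaps produces $\mu(p) = 1 + O(p^{-1-\delta_1})$, uniformly in $p$.

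For the second stage I would pick any $0 < \delta_2 < \delta_1$ and bound
$$
\sum_{q > L} |A(q)| \leq L^{-\delta_2} \sum_{q \geq 1} |A(q)|\, q^{\delta_2}.
$$
Multiplicativity factors the right side as the Euler product $\prod_p \bigl(1 + \sum_{t \geq 1} |A(p^t)|\, p^{t\delta_2}\bigr)$, and repeating the geometric argument of the first stage with the extra weight $p^{t\delta_2}$ shows that each Euler factor equals $1 + O(p^{-1-(\delta_1-\delta_2)})$, so the product converges absolutely. Setting $L = (\log P)^{C_0 R (d_1+d_2-1)}$ then gives the claimed estimate for $\mathfrak{S}(P) - \mathfrak{S}(\infty)$, and the same absolutely convergent product simultaneously justifies the identity (\ref{def sigular series}) and the existence of $\mu(p)$ in (\ref{def mu p}). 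The main obstacle is bookkeeping rather than conceptual: one has to verify that each of the two inequalities packaged into (\ref{ineq on Q}) yields strictly better than $p^{-1}$ decay on its corresponding range of $t$, uniformly in $p$; once that alignment is in place the rest of the argument is standard.
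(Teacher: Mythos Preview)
Your proposal is correct. The first stage---bounding $|A(p^t)|$ via Lemma \ref{to bound local factor} and then summing over $t$ in the two ranges using the two halves of (\ref{ineq on Q})---matches the paper's argument essentially line for line.

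The second stage takes a genuinely different route. The paper proceeds by proving a \emph{pointwise} bound $|A(q)|\ll q^{-1-\delta_2}$ valid for every $q$: it writes $q=\prod p_j^{t_j}$, uses multiplicativity and the prime-power bounds to get $|A(q)|\ll q^{R-Q}\prod_{t_j>2(d_1+d_2)} p_j^{Q}$, and then controls the last product by $q^{Q/(2d_1+2d_2)}$ since each such $p_j$ satisfies $p_j\leq p_j^{t_j/(2d_1+2d_2)}$. Summing the tail is then immediate. Your approach instead bypasses any pointwise estimate on $A(q)$ for composite $q$: the weighted tail inequality together with the Euler product reduces everything to the prime-power case already handled. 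Both are clean; the paper's route has the minor advantage that the pointwise bound (\ref{bound on Aq}) is reused verbatim in the error estimate (\ref{error bound}) just after the lemma, whereas your argument would require either reproving that bound or rephrasing (\ref{error bound}) in Euler-product form as well.
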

Therefore, the limit in ~(\ref{def sigular series}) exists, and the product in ~(\ref{def sigular series}) converges.
We leave the details  that these two quantities are equal to the reader.
\begin{proof}
For any $t \in \mathbb{N}$, we know that $\phi(p^t) = p^t(1 - 1/p) \geq \frac12 p^t$.
Therefore, by considering the two cases as in the statement of Lemma \ref{to bound local factor} we obtain
\begin{eqnarray}
| \mu(p) - 1 |
&\ll&
\sum_{1 \leq t \leq 2(d_1 + d_2)} p^{tR} p^{-(n_1 + n_2)t} p^{(n_1 + n_2)t - t Q}
+
\sum_{ t > 2(d_1 + d_2)} p^{tR} p^{-(n_1 + n_2)t} p^{Q + (n_1 + n_2)t - t Q }
\notag
\\
&\ll&
p^{R - Q}
+
p^Q p^{-(2d_1 + 2d_2 + 1)(Q-R)}
\notag
\\
&\ll&
p^{-1 - \delta_1}
\notag
\end{eqnarray}
for some $\delta_1 > 0$, where the last inequality follows from (\ref{ineq on Q}). We note that the implicit constants in $\ll$ are independent of $p$ here.

Let $q = p_1^{t_1} \cdots p_{v}^{t_{v}}$ be the prime factorization of $q \in \mathbb{N}$.
Without loss of generality, suppose we have $t_j \leq 2( d_1 + d_2) \ (1 \leq j \leq v_0)$ and
$t_j > 2( d_1 + d_2) \ (v_0 < j \leq v)$. Note we can assume the implicit constant in Lemma \ref{to bound local factor}
is $1$ for $p$ sufficiently large with the cost of $p^{-\varepsilon}$.
By a similar calculation as above
and the multiplicativity of $A(\cdot)$, it follows that
\begin{eqnarray}
\label{bound on Aq}
A(q)
\ll
q^{R - Q } \cdot  \left(  \prod_{j=v_0 + 1}^{v}  p_j^{Q} \right)
\leq
q^{R - Q } \cdot  q^{\frac{Q}{2d_1 + 2d_2} }
\leq
q^{- 1 - \delta_2}
\end{eqnarray}
for some $\delta_2 > 0$, where we obtained the last inequality from (\ref{ineq on Q}). We note that the implicit constant in $\ll$ is independent of $q$ here.
Therefore, we obtain
\begin{eqnarray}
\Big{|} \mathfrak{S}(P) -  \mathfrak{S}(\infty) \Big{|}
\leq
 \sum_{q > (\log P)^{C_0 (d_1 + d_2 - 1) R  } } |  A(q) |
\notag
\ll
(\log P)^{- C_0 (d_1 + d_2 - 1) R  \delta_2 }.
\notag
\end{eqnarray}
\end{proof}

Using the bound (\ref{bound on Aq}), we obtain that the first term in the $O$-term of (\ref{major arc lem}) is bounded by
\begin{eqnarray}
\label{error bound}
&&\frac{P_1^{n_1} P_2^{n_2}}{P^R (\log P)^{C_0 R(d_1 + d_2 -1)}} \sum_{1 \leq q \leq (\log P)^{C_0 R(d_1 + d_2 -1)} }  q |A(q)|
\\
&\ll&
\frac{P_1^{n_1} P_2^{n_2}}{P^R (\log P)^{C_0 R(d_1 + d_2 -1)}} \sum_{1 \leq q \leq (\log P)^{C_0 R(d_1 + d_2 -1)} }  q^{- \delta_2}
\notag
\\
&\ll&
\frac{P_1^{n_1} P_2^{n_2}}{P^R (\log P)^{C_0 R(d_1 + d_2 -1)}}(\log P)^{C_0 R(d_1 + d_2 -1)(1- \delta_2) }
\notag
\\
&\ll&
\frac{P_1^{n_1} P_2^{n_2}}{P^R }(\log P)^{- C_0 R(d_1 + d_2 -1) \delta_2 }.
\notag
\end{eqnarray}

Let $\nu_t(p)$ denote the number of solutions $(\mathbf{x}, \mathbf{y}) \in (\mathbb{U}_{p^t})^{n_1 + n_2}$
to the congruence relations $g_{r}( \mathbf{x} ; \mathbf{y} ) \equiv 0 \  (\text{mod } p^t)$ $(1 \leq r \leq R).$
It is then a basic exercise (see \cite[pp. 58]{Y}) to deduce
\begin{eqnarray}
1 + \sum_{j=1}^t A(p^j)
=
\frac{p^{tR}}{\phi(p^t)^{n_1 + n_2}  } \ \nu_t(p).
\notag
\end{eqnarray}
Therefore, under our assumptions on $\mathbf{g}$ we obtain
$$
\mu(p) = \lim_{t \rightarrow \infty}  \frac{  p^{tR} \ \nu_t(p) }{ \phi(p^t)^{n_1 + n_2}  }.
$$
We can then deduce by an application of  Hensel's lemma
that $\mu(p) > 0$,
if the system ~(\ref{set of eqn 3}) has a non-singular solution in $(\mathbb{Z}_p^{\times})^{n_1 + n_2}$. From this it follows in combination with (\ref{def sigular series}) and Lemma \ref{singular series lemma} that if the system ~(\ref{set of eqn 3}) has a non-singular solution in $(\mathbb{Z}_p^{\times})^{n_1 + n_2}$ for every prime $p$, then
\begin{equation}
\label{mu p positive}
\mathfrak{S}(\infty) = \prod_{p \ \text{prime}}\mu(p) > 0.
\end{equation}
By combining (\ref{error bound}) and Lemmas \ref{lemma major arc estimate} and \ref{singular series lemma}, we obtain the following.
\begin{prop}
\label{prop major}
Given any $c>0$, under our assumptions on $\mathbf{g}$ the following holds
$$
\int_{\mathfrak{M}(\theta_M)} S(\boldsymbol{\alpha} ) \ \mathbf{d} \boldsymbol{\alpha}
=  \mathfrak{S}(\infty) \mu(\infty) \  P_1^{n_1 - R d_1} P_2^{n_2 - R d_2} + O\left( \frac{ P_1^{n_1 - R d_1} P_2^{n_2 - R d_2} }{(\log P)^{c}} \right),
$$
where $P^{\theta_M} = (\log P)^{C_0}$.
\end{prop}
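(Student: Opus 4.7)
The proof is essentially an assembly of the results already established in Section \ref{sec major arcs}. The plan is to start from Lemma \ref{lemma major arc estimate}, substitute the bound (\ref{error bound}) for the first error term, and then use Lemma \ref{singular series lemma} to replace $\mathfrak{S}(P)$ with $\mathfrak{S}(\infty)$ at the cost of an acceptable error. The hypotheses on $\mathbf{g}$ (non-singular real solution in $(0,1)^{n_1+n_2}$ and non-singular $\mathbb{Z}_p^\times$-solutions for every $p$) enter only to justify (\ref{mu infty positive}) and (\ref{mu p positive}), ensuring that the main term coefficient $\mathfrak{S}(\infty)\mu(\infty)$ is a well-defined positive constant.

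First, I would apply Lemma \ref{lemma major arc estimate} directly, writing
\[
\int_{\mathfrak{M}(\vartheta_M)} S(\boldsymbol{\alpha}) \, \mathbf{d}\boldsymbol{\alpha} = \mathfrak{S}(P)\,\mu(\infty)\, P_1^{n_1-d_1 R} P_2^{n_2-d_2 R} + E_1 + E_2,
\]
where $E_1$ denotes the error term involving $\sum_q q |A(q)|$ and $E_2$ is the crude $O(P_1^{n_1-d_1R}P_2^{n_2-d_2R}/(\log P)^c)$ term. The bound on $E_1$ is precisely what the computation (\ref{error bound}) provides, giving
\[
E_1 \ll \frac{P_1^{n_1-d_1R}P_2^{n_2-d_2R}}{(\log P)^{C_0 R(d_1+d_2-1)\delta_2}}.
\]

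Next, I would invoke Lemma \ref{singular series lemma} to write $\mathfrak{S}(P) = \mathfrak{S}(\infty) + O\bigl((\log P)^{-C_0 R(d_1+d_2-1)\delta_2}\bigr)$. Since $\mu(\infty)$ is a fixed constant (finite by (\ref{(3.9') is S})), this introduces a further error of the same shape as $E_1$ after multiplication by $P_1^{n_1-d_1R}P_2^{n_2-d_2R}$. Collecting these contributions yields
\[
\int_{\mathfrak{M}(\vartheta_M)} S(\boldsymbol{\alpha}) \, \mathbf{d}\boldsymbol{\alpha} = \mathfrak{S}(\infty)\mu(\infty)\, P_1^{n_1-d_1R} P_2^{n_2-d_2R} + O\!\left( \frac{P_1^{n_1-d_1R}P_2^{n_2-d_2R}}{(\log P)^{C_0 R(d_1+d_2-1)\delta_2}}\right).
\]

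The only subtle point — and the step worth double-checking rather than an actual obstacle — is that the exponent $C_0 R(d_1+d_2-1)\delta_2$ must dominate the arbitrary constant $c$. Recall that $C_0$ satisfies $C\zeta < C_0 \leq C$ with $0<\zeta<1$ a fixed constant depending only on $\mathbf{G}$ and $R$. Since $C$ was chosen at the beginning of Section \ref{sec minor arcs} as a sufficiently large constant (independent of $P$), we may enlarge $C$ if necessary so that $C\zeta R(d_1+d_2-1)\delta_2 \geq c$, which forces $C_0 R(d_1+d_2-1)\delta_2 \geq c$ as well. This absorbs the error into the desired $O(P_1^{n_1-Rd_1}P_2^{n_2-Rd_2}/(\log P)^c)$ bound and completes the proof.
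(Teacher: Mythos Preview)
Your proposal is correct and matches the paper's own argument, which simply reads ``By combining (\ref{error bound}) and Lemmas \ref{lemma major arc estimate} and \ref{singular series lemma}, we obtain the following.'' One small clarification: the phrase ``under our assumptions on $\mathbf{g}$'' in the proposition refers to the codimension hypothesis (\ref{assmpn on F}), not to the local solubility conditions; the latter are only invoked afterwards (via (\ref{mu infty positive}) and (\ref{mu p positive})) to show $\mathfrak{S}(\infty)\mu(\infty)>0$, and are not needed for the asymptotic formula itself.
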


Finally, it is clear that Theorem \ref{thm main bihmg} follows from (\ref{orthog reln}) and Propositions \ref{prop minor} and \ref{prop major}. The fact that
under suitable local conditions, $\sigma_{\mathbf{g}} = \mathfrak{S}(\infty) \mu(\infty) > 0$ follows from (\ref{mu infty positive}) and (\ref{mu p positive}).

\section{Proof of Theorem \ref{thm main1} }
\label{sec final}
We begin this section by proving the following theorem.
\begin{thm}
\label{mainineqthm}
Let $d > 1$. Let $F(\mathbf{x}) \in \mathbb{Z}[x_1, \ldots, x_n]$ be a degree $d$ homogeneous form. Let us define
a bihomogeneous form
$$
G(\mathbf{x}; \mathbf{y}) = F(x_1y_1, \ldots, x_ny_n).
$$
Then we have
$$
\min \{ \textnormal{codim } V_{{G}, 1}^*, \textnormal{codim } V_{{G}, 2}^*  \} \geq  \frac{\textnormal{codim } V_{{F}}^*}{2}.
$$
\end{thm}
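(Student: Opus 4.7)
The plan is to stratify $V_{G,1}^*$ by the vanishing pattern of the coordinates of $\mathbf{y}$. By the chain rule,
$$
\frac{\partial G}{\partial x_j}(\mathbf{x},\mathbf{y}) \;=\; y_j \cdot \frac{\partial F}{\partial x_j}(x_1 y_1, \ldots, x_n y_n),
$$
so $(\mathbf{x},\mathbf{y}) \in V_{G,1}^*$ if and only if for every $j$ either $y_j = 0$ or $(\partial F/\partial x_j)(\mathbf{x}\cdot\mathbf{y}) = 0$, where $\mathbf{x}\cdot\mathbf{y} := (x_1 y_1, \ldots, x_n y_n)$. For each $S \subseteq \{1, \ldots, n\}$ I set
$$
W_S \;:=\; \{ (\mathbf{x},\mathbf{y}) \in \mathbb{C}^{2n} : y_j = 0 \ (j \in S), \ (\partial F/\partial x_j)(\mathbf{x}\cdot\mathbf{y}) = 0 \ (j \notin S)\}.
$$
Then $V_{G,1}^* = \bigcup_{S} W_S$, and since the codimension of a finite union of subvarieties is the minimum of the codimensions, it suffices to show $\textnormal{codim } W_S \geq \tfrac{1}{2}\textnormal{codim } V_F^*$ for every $S$.

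To compute $\dim W_S$, let $f(\widetilde{\mathbf{x}})$ be the degree $d$ form obtained from $F$ by substituting $x_j = 0$ for $j \in S$, where $\widetilde{\mathbf{x}} = (x_j)_{j \notin S}$. On the locus $\{y_j = 0 : j \in S\}$ the equations defining $W_S$ reduce to $(\partial f/\partial x_j)\bigl((x_i y_i)_{i\notin S}\bigr) = 0$ for $j \notin S$, which cut out the preimage $\pi_S^{-1}(V_f^*)$ under the component-wise product morphism $\pi_S : (x_j, y_j)_{j \notin S} \mapsto (x_j y_j)_{j \notin S}$. Since $\pi_S$ is surjective with every fiber of pure dimension $n-|S|$ (each fiber is a product of curves $\{ab = c\}$ in $\mathbb{C}^2$), standard fiber dimension arguments give $\dim \pi_S^{-1}(V_f^*) = \dim V_f^* + (n-|S|)$. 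Accounting for the $|S|$ free variables $(x_j)_{j \in S}$, this yields $\dim W_S = n + \dim V_f^*$ and hence
$$
\textnormal{codim } W_S \;=\; |S| + \textnormal{codim}_{\mathbb{C}^{n-|S|}} V_f^*.
$$

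The last ingredient is an analog of Lemma \ref{Lemma on the B rank} for a single ordinary form, namely that setting $|S|$ variables to zero in $F$ drops the codimension of its singular locus by at most $2|S|$:
$$
\textnormal{codim}_{\mathbb{C}^{n-|S|}} V_f^* \;\geq\; \textnormal{codim } V_F^* - 2|S|.
$$
By iteration it suffices to treat the case $|S| = 1$. For $S = \{1\}$ one has $V_F^* \cap \{x_1 = 0\} = V_f^* \cap V\bigl(\tfrac{\partial F}{\partial x_1}(0,\cdot)\bigr)$ inside $\{x_1 = 0\} \cong \mathbb{C}^{n-1}$; since every irreducible component of $V_f^*$ contains $\mathbf{0}$ (as $V_f^*$ is cut out by homogeneous polynomials), Lemma \ref{lemma on dim} applies componentwise and gives $\dim V_f^* \leq \dim(V_F^* \cap \{x_1=0\}) + 1 \leq \dim V_F^* + 1$, which rearranges to the claim. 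Combining everything,
$$
\textnormal{codim } W_S \;\geq\; |S| + \max\bigl(0, \textnormal{codim } V_F^* - 2|S|\bigr) \;\geq\; \max\bigl(|S|, \textnormal{codim } V_F^* - |S|\bigr) \;\geq\; \tfrac{1}{2}\textnormal{codim } V_F^*,
$$
as desired. The bound for $\textnormal{codim } V_{G,2}^*$ follows by the symmetric argument interchanging $\mathbf{x}$ and $\mathbf{y}$. The main mild technical point is the fiber dimension equality for $\pi_S$, which is routine once one observes that $\pi_S$ is surjective with equidimensional fibers.
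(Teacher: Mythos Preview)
Your proof is correct and takes a genuinely different route from the paper's. The paper fixes a maximal irreducible component $X$ of $V_{G,1}^*$, identifies the set of indices $j$ for which $X\subseteq V(y_j)$, then \emph{slices} $X$ by specialising the remaining $y_j$ to generic nonzero constants (Claim~1). This reduces the problem to bounding $\dim T_m$, where $T_k=\{\mathbf{x}:\partial F/\partial x_1=\cdots=\partial F/\partial x_k=x_{k+1}=\cdots=x_n=0\}$; the bound $\max_k\dim T_k\le (n+\dim V_F^*)/2$ is obtained by a one--step ``walk'' argument (Claim~2) using $|\dim T_{k+1}-\dim T_k|\le 1$, $\dim T_k\le k$, and $\dim T_n=\dim V_F^*$.

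Your argument instead stratifies $V_{G,1}^*$ as $\bigcup_S W_S$ over all vanishing patterns $S$ of $\mathbf{y}$, computes $\dim W_S$ exactly via the equidimensional fibres of the coordinatewise product map, and then invokes the single--form analogue of Lemma~\ref{Lemma on the B rank}: setting one variable of $F$ to zero drops $\textnormal{codim}\,V_F^*$ by at most $2$. This makes the factor $\tfrac12$ completely transparent---you lose at most $2|S|$ from zeroing variables and recover $|S|$ from the free $x_j$'s---whereas in the paper the same factor emerges from the combinatorics of the walk on $(\dim T_k)_k$. Both arguments are of comparable length; yours is perhaps a little more structural, while the paper's avoids the (routine but not entirely trivial) fibre--dimension statement for $\pi_S$ by working on a single component. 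One small remark: the lower bound $\dim\pi_S^{-1}(V_f^*)\ge \dim V_f^*+(n-|S|)$ is most cleanly seen via the section $(\mathbf z,\mathbf t)\mapsto(z_jt_j^{-1},t_j)_{j\notin S}$ on $V_f^*\times(\mathbb{C}^*)^{n-|S|}$, which you might mention to make that step fully self-contained.
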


\begin{proof}
Let $X$ be an irreducible component of $V_{G,1}^*$ such that $\dim X = \dim V_{G,1}^*$.
By relabeling the variables if necessary, let us suppose we have
$$
X \not \subseteq V(y_j) \ \ (1 \leq j \leq m) \ \ \textnormal{  and  } \  \  X \subseteq V(y_i) \ \ (m + 1 \leq j \leq n)
$$
for some $0 \leq m \leq n$.

Claim 1:  There exists $(z_1, \ldots, z_m) \in (\mathbb{C}\backslash \{ 0\})^m$ such that
$$
\dim X \cap ( \cap_{1 \leq j \leq m} V(y_j - z_j) ) \geq \dim X - m.
$$
\begin{proof}[Proof of Claim 1]
First we show that there exists $(z_1, \ldots, z_m) \in (\mathbb{C}\backslash \{ 0\})^m$
such that $X \cap ( \cap_{1 \leq j \leq m} V(y_j - z_j) ) \not = \emptyset$.
Suppose such $(z_1, \ldots, z_m)$ does not exist. Then we have $X = \cup _{1 \leq j \leq m} X \cap V(y_j)$.
Since $X$ is irreducible, this implies $X = X \cap V(y_{j_0})$ for some $1 \leq j_0 \leq m$; we have a contradiction because
$X \not \subseteq V(y_{j_0})$.

Let $P = (\mathbf{x}_0, z_1, \ldots, z_m, \mathbf{0}) \in X$ with $(z_1, \ldots, z_m) \in (\mathbb{C}\backslash \{ 0\})^m$.
Let us consider
$$
\emptyset  \not =   X \cap V(y_1 - z_1) =  \cup_{1 \leq j \leq \ell_1} W_{1, j},
$$
where $W_{1, j}$'s are the irreducible components of $X \cap V(y_1 - z_1)$.
Recall if $Z$ is an irreducible affine variety and $H$ is a hypersurface, then
we have one of: $Z \cap H = Z$, $Z \cap H = \emptyset$ and every irreducible component of $Z \cap H$ has dimension $\dim Z - 1$.
Therefore, it follows that the $\dim W_{1, j} \geq \dim X - 1$ for each $1 \leq j \leq \ell_1$.

Next without loss of generality suppose $P \in W_{1, 1}$.
Let us consider
$$
\emptyset  \not = W_{1,1} \cap V(y_2 - z_2) = \cup_{1 \leq j \leq \ell_2} W_{2, j},
$$
where $W_{2, j}$'s are the irreducible components of $W_{1,1} \cap V(y_2 - z_2)$. By the same argument as above, we obtain
$$
\dim W_{2, j} \geq  \dim W_{1, 1} -  1   \geq  \dim X - 2 \ \ (1 \leq j \leq \ell_2).
$$
By continuing in this manner, we obtain the result.
\end{proof}
Let us fix $(z_1, \ldots, z_m) \in (\mathbb{C}\backslash \{ 0\})^m$ as in Claim 1.
Let $z_{m+1} = \cdots = z_n = 0$. Then we have
\begin{eqnarray}
\label{1.2coorect'}
\dim X \cap ( \cap_{1 \leq j \leq n} V(y_j - z_j) ) &=& \dim X \cap ( \cap_{1 \leq j \leq m} V(y_j - z_j) )
\\
\notag
&\geq& \dim X - m
\\
\notag
&=& \dim V_{G,1}^* - m.
\end{eqnarray}
We also have
\begin{eqnarray}
\label{1.2coorect}
&&X \cap ( \cap_{1 \leq j \leq n} V(y_j - z_j) )
\\
\notag
&\subseteq&
V_{G,1}^* \cap ( \cap_{1 \leq j \leq n} V(y_j - z_j) )
\\
\notag
&=&
\Big{\{} \mathbf{x} \in \mathbb{C}^{n} : \frac{\partial F}{ \partial x_1} (x_1z_1, \ldots, x_m z_m, \mathbf{0} )
= \cdots = \frac{\partial F}{ \partial x_m} (x_1z_1, \ldots, x_m z_m, \mathbf{0} )  = 0 \Big{\}}
\\
\notag
&\times& \{ \mathbf{y} \in \mathbb{C}^n : y_j = z_j \ (1 \leq j \leq n)  \}.
\end{eqnarray}
For each $1 \leq k \leq n$, let us define
$$
T_{k} = \Big{\{} \mathbf{x} \in \mathbb{C}^{n} : \frac{\partial F}{ \partial x_1} (\mathbf{x})
= \cdots = \frac{\partial F}{ \partial x_k} (\mathbf{x})  = x_{k+1} = \cdots = x_n = 0 \Big{\}}.
$$
Then it follows from (\ref{1.2coorect}) that
\begin{eqnarray}
\label{1.3coorect}
\dim X \cap ( \cap_{1 \leq j \leq n} V(y_j - z_j) ) \leq (n-m) + \dim T_m.
\end{eqnarray}

Claim 2: We have
\begin{eqnarray}
\label{boundonTm}
\max_{1 \leq k \leq n} \dim T_{k} \leq \frac{n + \dim V_F^*}{2}.
\end{eqnarray}
\begin{proof}[Proof of Claim 2]
First we have
$$
\dim T_{k+1} -1 \leq \dim T_{k} \leq  \dim T_{k+1} + 1.
$$
This is because the dimension of
$$
\Big{\{} \mathbf{x} \in \mathbb{C}^{n} : \frac{\partial F}{ \partial x_1} (\mathbf{x})
= \cdots = \frac{\partial F}{ \partial x_k} (\mathbf{x})  = x_{k+2} = \cdots = x_n = 0 \Big{\}}
$$
is either $\dim T_{k+1}$ or $\dim T_{k+1} + 1$. Furthermore,
intersecting this set with $V(x_{k+1})$, which is $T_k$, either reduces the dimension by $1$
or the dimension stays the same. Therefore, we have $\dim T_{k+1} -1 \leq \dim T_k \leq \dim T_{k+1} + 1$.
Here it is important that we are only dealing with homogeneous forms, because every irreducible component of
an affine variety $Z$ defined by homogeneous forms contains $\mathbf{0}$; therefore, any hypersurface $H$
defined by a homogeneous form intersects every irreducible component of $Z$, and thus we always have $\dim Z \cap H \geq \dim Z - 1$
in this case.

Let $L_1, \ldots, L_n$ be a set of integers satisfying $L_n = \dim V_F^*$, $0 \leq L_k \leq k$ $(1 \leq k \leq n)$ and
$$
L_{k+1} -1 \leq L_{k} \leq  L_{k+1} + 1 \ \ (1 \leq k \leq n-1).
$$
Then it is a basic exercise to show that the largest possible value of $\max_{1 \leq k \leq n}L_k$ for any such set of integers is $k_0$, where
$$
k_0 = \left\{
    \begin{array}{ll}
         \frac{n + \dim V_F^*}{2}
         &\mbox{if } n \equiv \dim V_F^* \ (\textnormal{mod }2),\\
         \frac{n + \dim V_F^*}{2} - \frac12
         &\mbox{if } n \not \equiv \dim V_F^* \ (\textnormal{mod }2).
    \end{array}
\right.
$$
Since we can choose $L_k = \dim T_k$ $(1 \leq k \leq n)$, the result follows.
\end{proof}
Therefore, by combining (\ref{1.2coorect'}), (\ref{1.3coorect}) and (\ref{boundonTm}), we obtain
$$
\textnormal{codim }V_{G,1}^* = 2n - \dim V_{G,1}^* \geq  \frac{n - \dim V_F^*}{2}  = \frac{\textnormal{codim }V_F^*}{2}.
$$
By symmetry we obtain the same bound for $\textnormal{codim }V_{G,2}^*$ as well.
\end{proof}

Let $d > 1$. Throughout this section we let
$f(\mathbf{x})$ be a degree $d$ polynomial in $\mathbb{Z}[x_1, \ldots, x_n]$, and denote its degree $d$ homogeneous portion by $F(\mathbf{x})$.
We now solve the equation
\begin{equation}
\label{eqn main 2}
f(\mathbf{x}) = 0
\end{equation}
in semiprimes.

Let $N = N_1 N_2$ where $N_1 \geq N_2$. Let us define
\begin{equation}
\mathcal{N}_{2}(f; N ; N_1, N_2 ) =
\sum_{\substack{ z_1 \in [0, N] \\ z_1 = p_1 q_1, \ p_1 \geq q_1 \\ p_{1} \in [0, N_1] \cap \wp \\ q_{1} \in [0, N_2] \cap \wp } }
\cdots
\sum_{\substack{ z_n \in [0, N] \\ z_n = p_n q_n, \ p_n \geq q_n \\ p_{n} \in [0, N_1] \cap \wp \\ q_{n} \in [0, N_2] \cap \wp } }
\prod_{j=1}^n (\log p_{j}) (\log q_j)  \  \cdot  \ \mathbbm{1}_{ V(f) } (\mathbf{z}).
\end{equation}
It is clear that $\mathcal{N}_{2}(f; N ; N_1, N_2 )$ is the number of semiprime solutions $(p_1 q_1, \ldots, p_n q_n) \in [0,N]^n$ to the equation (\ref{eqn main 2}), where $p_j \geq q_j$, $p_j \in [0, N_1] \cap \wp$, and  $q_j \in [0, N_2] \cap \wp$, counted with weight
$\prod_{1 \leq j \leq n} (\log p_j) (\log q_j)$. We also consider the following modification of the local conditions ($\star$) given in Section \ref{sec intro}.
\medskip

\noindent \textbf{Local conditions ($\star'$).} The equation
\begin{equation}
\label{local real}
F(\mathbf{x}) = 0
\end{equation}
has a non-singular real solution in $(0,1)^{n}$, and the equation (\ref{eqn main 2})
has a non-singular solution in $( \mathbb{Z}_p^{\times})^n$ for every prime $p$.
\medskip

It is clear that these conditions are identical to the local conditions ($\star$) 
when the polynomial in consideration is homogeneous. We prove the following theorem.
\begin{thm}
\label{thm main 2}
Let $\delta \leq 1/2$. Suppose that $f$ satisfies the local conditions \textnormal{($\star'$)} and
$$
\textnormal{codim } V_{F}^* >  2 \cdot  4^d  \max \left\{  4 (2d-1), \frac{d}{\delta}  \right\}.
$$
Then we have
\begin{equation}
\notag
\mathcal{N}_2(f; N; N^{1 - \delta}, N^{\delta} ) \gg N^{n - d}.
\end{equation}
\end{thm}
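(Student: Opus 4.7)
The plan is to convert the problem of counting semiprime solutions of $f = 0$ into a problem about counting prime solutions of a bihomogeneous equation, and then invoke Theorem \ref{thm main bihmg}. To this end I would define
\[
g(\mathbf{x}; \mathbf{y}) = f(x_1 y_1, \ldots, x_n y_n), \qquad G(\mathbf{x}; \mathbf{y}) = F(x_1 y_1, \ldots, x_n y_n),
\]
observe that $G$ is bihomogeneous of bidegree $(d, d)$ and is the top-degree portion of $g$, and work in the setting of Section \ref{sec prelim} with $R = 1$, $d_1 = d_2 = d$, $n_1 = n_2 = n$, $P_1 = N^{1-\delta}$, $P_2 = N^\delta$, $\mathfrak{b} = (1-\delta)/\delta$, and $P = P_1^d P_2^d = N^d$.

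Next I would check the hypotheses of Theorem \ref{thm main bihmg}. With these choices the codimension requirement \eqref{assmpn on F} becomes
\[
\textnormal{codim }V_{G, i}^* > 4^d \max\!\left\{ 4(2d - 1),\, d/\delta \right\} \quad (i = 1, 2),
\]
which by Theorem \ref{mainineqthm} is implied by the hypothesis $\textnormal{codim }V_F^* > 2 \cdot 4^d \max\{4(2d-1), d/\delta\}$ of Theorem \ref{thm main 2}. For the local conditions, given a non-singular solution $\mathbf{z}_0 \in (\mathbb{Z}_p^\times)^n$ of $f = 0$, the point $(x_{0,j}, y_{0,j}) = (z_{0,j}, 1) \in (\mathbb{Z}_p^\times)^{2n}$ is a non-singular solution of $g = 0$, since $\partial g/\partial x_j(\mathbf{x}_0; \mathbf{y}_0) = \partial f/\partial z_j(\mathbf{z}_0)$; given a non-singular real solution $\mathbf{w}_0 \in (0,1)^n$ of $F = 0$, the point $(\sqrt{w_{0,j}}, \sqrt{w_{0,j}})_{j=1}^n \in (0,1)^{2n}$ is a non-singular real solution of $G = 0$. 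Theorem \ref{thm main bihmg} therefore yields constants $\sigma_g > 0$ and $c > 0$ with
\[
\mathcal{N}_\wp(g; N^{1-\delta}, N^\delta) = \sigma_g\, N^{n - d} + O\!\left( \frac{N^{n-d}}{(\log N)^c} \right).
\]

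Finally, I would pass from $\mathcal{N}_\wp$ to $\mathcal{N}_2$ by a combinatorial argument. For any $(\mathbf{x}, \mathbf{y})$ contributing to $\mathcal{N}_\wp(g; N_1, N_2)$, set $p_j = \max(x_j, y_j)$ and $q_j = \min(x_j, y_j)$; then $p_j \leq N_1$, $q_j \leq N_2$, $p_j \geq q_j$, the products $p_j q_j$ form a semiprime solution of $f = 0$, and the log-weights agree, $\prod_j (\log x_j)(\log y_j) = \prod_j (\log p_j)(\log q_j)$. A given ordered tuple $(\mathbf{p}, \mathbf{q})$ has at most $2^n$ preimages, with doubling at coordinate $j$ only when $p_j \neq q_j$ and $p_j \leq N_2$ (the sole case in which both orderings of the pair lie in $[0, N_1] \times [0, N_2]$). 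Hence $\mathcal{N}_\wp \leq 2^n \mathcal{N}_2$, yielding $\mathcal{N}_2(f; N; N^{1-\delta}, N^\delta) \geq 2^{-n} \mathcal{N}_\wp \gg N^{n-d}$.

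The main obstacle I anticipate is this last passage: one must track how the rectangular range $[0, N_1]^n \times [0, N_2]^n$ interacts with the ordering constraint $p_j \geq q_j$ (which is precisely what forces the hypothesis $\delta \leq 1/2$, so that $N_1 \geq N_2$). Everything else reduces mechanically to Theorems \ref{mainineqthm} and \ref{thm main bihmg}.
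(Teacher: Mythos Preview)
Your proposal is correct and follows essentially the same route as the paper's proof: reduce to the bihomogeneous setting via $g(\mathbf{x};\mathbf{y})=f(x_1y_1,\ldots,x_ny_n)$, feed Theorem~\ref{mainineqthm} into the hypothesis \eqref{assmpn on F} of Theorem~\ref{thm main bihmg}, verify the local conditions for $g$ and $G$, and finish with $\mathcal{N}_2 \geq 2^{-n}\mathcal{N}_\wp$. The only cosmetic differences are that the paper takes $(\xi_1,\ldots,\xi_n,1/2,\ldots,1/2)$ as the real non-singular solution of $G=0$ (using homogeneity of $F$) rather than your $(\sqrt{w_{0,j}},\sqrt{w_{0,j}})$, and that the paper dispatches the $2^{-n}$ inequality in a single phrase (``taking into account possible repetitions'') where you spell out the preimage count.
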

By taking $\delta = 1/2$ in the above theorem, the following is an immediate corollary which also implies Theorem \ref{thm main1}.
\begin{cor}
\label{cor main}
Suppose that $f$ satisfies the local conditions \textnormal{($\star'$)} and
$
\textnormal{codim } V_{F}^* > 4^d \cdot 8(2d-1).
$
Then we have
$
\mathcal{N}_2(f; N ; \sqrt{N}, \sqrt{N}) \gg N^{n - d}.
$
\end{cor}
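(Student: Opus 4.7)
My plan is to prove Theorem \ref{thm main 2}, from which Corollary \ref{cor main} follows by taking $\delta = 1/2$. The strategy, announced in the introduction, is to reduce the semiprime problem for $f$ to a prime problem for its bihomogenization: set $g(\mathbf{x}, \mathbf{y}) := f(x_1 y_1, \ldots, x_n y_n)$, so that $g$ is a polynomial of degree $2d$ whose top-degree portion $G(\mathbf{x}, \mathbf{y}) = F(x_1 y_1, \ldots, x_n y_n)$ is bihomogeneous of bidegree $(d,d)$. Then apply Theorem \ref{thm main bihmg} to $g$ with parameters $R = 1$, $d_1 = d_2 = d$, $n_1 = n_2 = n$, $P_1 = N^{1-\delta}$, $P_2 = N^\delta$, and finally translate its output for prime tuples into the desired lower bound on the semiprime count.

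With $\mathfrak{b} = (1-\delta)/\delta \geq 1$ (since $\delta \leq 1/2$) one has $\mathfrak{b}d + d = d/\delta$, so the codimension hypothesis of Theorem \ref{thm main bihmg} reduces to
$$\textnormal{codim } V_{G, i}^* > 4^d \max\{4(2d-1), d/\delta\}.$$
This is implied by the hypothesis of Theorem \ref{thm main 2} together with Theorem \ref{mainineqthm}, which gives $\textnormal{codim } V_{G, i}^* \geq \tfrac{1}{2}\textnormal{codim } V_F^*$. The local conditions on $g$ are obtained from those on $f$ by explicit lifts: a non-singular real solution $\mathbf{a} \in (0,1)^n$ of $F = 0$ lifts to $(b_j, c_j) = (\sqrt{a_j}, \sqrt{a_j}) \in (0,1)^{2n}$, at which $G$ vanishes non-singularly because $\partial_{x_j} G(\mathbf{b}, \mathbf{c}) = c_j\, \partial_{x_j} F(\mathbf{a})$ and $c_j > 0$; and a non-singular $p$-adic unit solution $\mathbf{a}$ of $f = 0$ lifts to $(\mathbf{a}, \mathbf{1}) \in (\mathbb{Z}_p^{\times})^{2n}$, where $g$ vanishes non-singularly by the analogous computation. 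Theorem \ref{thm main bihmg} then yields
$$\mathcal{N}_\wp(g; N^{1-\delta}, N^\delta) = \sigma_g\, N^{n-d} + O\bigl(N^{n-d}(\log N)^{-c}\bigr), \qquad \sigma_g > 0.$$

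The main obstacle is to extract from this prime-tuple count the semiprime count $\mathcal{N}_2$, which imposes the additional constraint $p_j \geq q_j$ for every $j$. The crucial observation is that $g$ is invariant under the $2^n$-action by per-coordinate swaps $(x_j, y_j) \leftrightarrow (y_j, x_j)$, and that the weight $\Lambda^*(\mathbf{p})\Lambda^*(\mathbf{q})$ is also symmetric. In the case $N_1 = N_2 = \sqrt{N}$ needed for Corollary \ref{cor main}, this action preserves the summation range of $\mathcal{N}_\wp$, so each generic orbit (of size $2^n$, with no $p_j = q_j$) contains exactly one representative satisfying $p_j \geq q_j$ for all $j$, giving
$$\mathcal{N}_2(f; N; \sqrt{N}, \sqrt{N}) = 2^{-n}\mathcal{N}_\wp(g; \sqrt{N}, \sqrt{N}) + O(\text{tie terms}) \gg N^{n-d}.$$
The tie terms (tuples with $p_j = q_j$ for some $j$) are controlled by applying Theorem \ref{thm main bihmg} to each specialization $g|_{x_j = y_j = p}$ and summing over primes $p \leq \sqrt{N}$; Lemma \ref{Lemma on the B rank} ensures the relevant codimensions remain adequate after this specialization, so the tie contribution is $O(N^{n-d-1/2}\log N) = o(N^{n-d})$. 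For general $\delta < 1/2$ (needed for Theorem \ref{thm main 2}), the symmetry argument is adapted to the asymmetric range by restricting to tuples with $p_j \in (N_2, N_1]$ for every $j$, which automatically enforces $p_j > q_j$; the complement is absorbed using $\mathcal{N}_\wp(g; N_2, N_2) = O(N^{2\delta(n-d)}) = o(N^{n-d})$ together with inclusion-exclusion on the ranges.
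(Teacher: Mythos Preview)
Your overall strategy matches the paper's: define $g(\mathbf{x};\mathbf{y}) = f(x_1y_1,\ldots,x_ny_n)$, verify the codimension hypothesis of Theorem~\ref{thm main bihmg} via Theorem~\ref{mainineqthm}, lift the local conditions, and apply Theorem~\ref{thm main bihmg}. The real-solution lift differs cosmetically (the paper uses $(\xi_1,\ldots,\xi_n,\tfrac12,\ldots,\tfrac12)$ rather than $(\sqrt{a_j},\sqrt{a_j})$), but both are valid.

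Where you diverge is in the final step, and you make it much harder than necessary. The paper simply records the \emph{inequality}
\[
\mathcal{N}_2(f;N;N^{1-\delta},N^{\delta}) \;\geq\; 2^{-n}\,\mathcal{N}_\wp(g;N^{1-\delta},N^{\delta}),
\]
valid for every $\delta \leq 1/2$, because the map sending a prime tuple $(\mathbf{p},\mathbf{q}) \in [0,N_1]^n\times[0,N_2]^n$ to the ordered semiprime representation $(\max(p_j,q_j),\min(p_j,q_j))_j$ lands in the range defining $\mathcal{N}_2$ and is at most $2^n$-to-$1$. This is exact: no tie-term analysis, no inclusion--exclusion, no uniformity issues. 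Your own orbit decomposition already contains this (orbits have size $\leq 2^n$, so $\mathcal{N}_2 = \sum_O w_O \geq 2^{-n}\sum_O |O|\,w_O = 2^{-n}\mathcal{N}_\wp$), but you then embark on bounding the tie contribution via specializations $g|_{x_j=y_j=p}$ and, for $\delta<1/2$, a range-restriction argument. None of this is needed for a lower bound, and the $\delta<1/2$ sketch in particular would require mixed-range asymptotics that you do not actually establish. Drop all of it: the one-line inequality suffices and handles all $\delta \leq 1/2$ at once.
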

\begin{proof}[Proof of Theorem \ref{thm main 2}]
We define $g(\mathbf{x}; \mathbf{y}) = f(x_1 y_1, \ldots, x_n y_n)$,
and denote its degree $2d$ homogeneous portion by $G(\mathbf{x}; \mathbf{y}) = F(x_1 y_1, \ldots, x_n y_n)$,
which is bihomogeneous in $\mathbf{x}$ and $\mathbf{y}$ of bidegree $(d,d)$.
It is clear that if $(\mathbf{x}, \mathbf{y}) = (p_1, \ldots, p_n, q_1, \ldots, q_n) \in ([0, N^{1- \delta}]^{n} \times [0, N^{\delta}]^n) \cap \wp^{2n}$ is a prime solution to the equation
$g(\mathbf{x}; \mathbf{y}) = 0$, then $(p_1q_1, \ldots, p_n q_n) \in [0, N]^{n}$ is a semiprime solution to the equation (\ref{eqn main 2}).
Therefore, by taking into account possible repetitions we have
\begin{equation}
\label{main ineq 1}
\mathcal{N}_2(f; N; N^{1 - \delta}, N^{\delta} ) \geq \frac{1}{2^n} \ \mathcal{N}_{\wp}( g ; N^{1 - \delta}, N^{\delta} ).
\end{equation}
By Theorem \ref{mainineqthm}, we have
\begin{equation}
\label{final ineq 2}
\min \{ \textnormal{codim } V_{{G}, 1}^*, \textnormal{codim } V_{{G}, 2}^*  \} \geq   \frac{\textnormal{codim } V_{{F}}^*}{2} > 4^d \max \left\{  4 (2d-1), \frac{d}{\delta}  \right\}.
\end{equation}
It follows
that the bihomogeneous form $G$ satisfies (\ref{assmpn on F}) with $d_1 = d_2 = d$, $P_1 = N^{1 - \delta}$, $P_2 = N^{\delta}$, $R=1$, and
$\mathfrak{b} = \frac{1 - \delta}{\delta}$. Therefore, Theorem \ref{thm main bihmg} gives us
\begin{equation}
\label{eqn asymp}
\mathcal{N}_{\wp}( g ; N^{1 - \delta}, N^{\delta} ) =  \sigma_g N^{n - d} + O\left( \frac{ N^{n - d} }{(\log N)^{c}} \right)
\end{equation}
for some $c>0$.

We now prove that $\sigma_g$ in (\ref{eqn asymp}) is in fact positive.
Suppose the equation (\ref{local real})
has a non-singular real solution $(\xi_1, \ldots, \xi_n) \in (0,1)^{n}$.
Then it can be verified that
$(\xi_1, \ldots, \xi_n,$ $1/2, \ldots, 1/2) \in (0,1)^{2n}$
is a non-singular real solution to
the equation $G(\mathbf{x}; \mathbf{y}) = 0.$
Similarly if the equation (\ref{eqn main 2})
has a non-singular solution $(\xi_1, \ldots, \xi_n) \in ( \mathbb{Z}_p^{\times})^n$, then
$(\xi_1, \ldots,$ $\xi_n, 1, \ldots, 1) \in ( \mathbb{Z}_p^{\times})^{2n}$
is a non-singular solution in $( \mathbb{Z}_p^{\times})^{2n}$ to the equation
$g(\mathbf{x}; \mathbf{y}) = 0$.
Thus it follows from Theorem \ref{thm main bihmg} that $\sigma_g > 0$.
Therefore, we obtain from (\ref{main ineq 1}) and  (\ref{eqn asymp}) that
$\mathcal{N}_2(f; N; N^{1 - \delta}, N^{\delta})
\gg N^{n - d}.$
\end{proof}






\section*{Acknowledgments} 
The author would like to thank Tim Browning, Brian Cook, Natalia Garcia-Fritz, Damaris Schindler, and Trevor Wooley for many helpful discussions, and
the anonymous referees for their useful comments.
A large portion of this work was completed while the author was attending
the Thematic Program on Unlikely Intersections, Heights, and Efficient Congruencing
at the Fields Institute, and he would like to thank the Fields Institute for their support and for providing an excellent environment to work on this paper.
He would also like to thank M. Ram Murty and the Department of Mathematics and Statistics at Queen's University,
and EPSRC grant \texttt{EP/P026710/1} for their support while completing this work.

\bibliographystyle{amsplain}


\begin{dajauthors}
\begin{authorinfo}[sy]
  Shuntaro Yamagishi\\
  Mathematisch Instituut\\
  Universiteit Utrecht\\
  Utrecht, Nederland\\
  s\imagedot{}yamagishi\imageat{}uu\imagedot{}nl 
\end{authorinfo}
\end{dajauthors}

\end{document}